\newtheorem{thm}{Theorem}
\newtheorem{prop}{Proposition}
\newtheorem{lem}{Lemma}
\newtheorem{cor}{Corollary}
\theoremstyle{remark}
\newtheorem{rem}{Remark}
\theoremstyle{definition}
\theoremstyle{axiom}
\newtheorem{defn}{Definition}
\newcommand{\C}{\mathbb{C}}
\newcommand{\R}{\mathbb{ R}}
\newcommand{\Z}{\mathbb{ Z}}
\title{ The orbit spaces $G_{n,2}/T^n$  and the Chow quotients  $G_{n,2}\!/\!/(\C ^{\ast})^{n}$  of the Grassmann manifolds $G_{n,2}$}
\author{Victor M.~Buchstaber and Svjetlana Terzi\'c}
\begin{document}
\maketitle
\begin{abstract}
The  focus of our paper is on  the complex Grassmann manifolds $G_{n,2}$  which appear as one of the fundamental objects  in developing the interaction between algebraic geometry and algebraic topology.
In his  well-known paper  Kapranov   has  proved that the Deligne-Mumford compactification   $\overline{\mathcal{M}}(0,n)$ of $n$-pointed curves  of genus zero  can be realized as the Chow quotient $G_{n,2}\!/\!/(\C ^{\ast})^{n}$.  
  In our recent papers,   the constructive description of the orbit space $G_{n,2}/T^n$ has been obtained. In getting this result   our  notions of the CW-complex of the admissible polytopes and the universal space of parameters $\mathcal{F}_{n}$ for  $T^n$-action on $G_{n,2}$ were of essential use.  Using technique of the wonderful compactification, in this paper   it is given an explicit construction of the space $\mathcal{F}_{n}$. Together with   Keel's  description  of $\overline{\mathcal{M}}(0,n)$, this construction  enabled us to obtain   an explicit   diffeomorphism between  $\mathcal{F}_{n}$ and  $\overline{\mathcal{M}}(0,n)$.  Thus,  we showed  that the space $G_{n,2}\!/\!/(\C ^{\ast})^{n}$ can be realized as our universal space of parameters $\mathcal{F}_{n}$.   
In this way,  we give   description  of the structure in  $G_{n,2}\!/\!/(\C ^{\ast})^{n}$, that is $\overline{\mathcal{M}}(0,n)$    in terms of the CW-complex of the admissible polytopes for $G_{n,2}$ and their spaces of parameters.
\footnote{MSC 2020: 57N65, 14H10, 14M15, 14C05, 14N20; keywords: universal space of parameters, wonderful compactification, moduli space of stable curves, Chow quotient, space of parameters of cort\'ege of admissible polytopes}


\end{abstract} 

\newpage

\tableofcontents

\section{Introduction}
The questions about  the action  of the maximal  algebraic torus as well as the induced action of the compact torus on the complex Grassmann manifolds  naturally arise in many areas of mathematics, ~\cite{GS}, ~\cite{GKZ},~\cite{GeM},~\cite{GM},~\cite{Kap}. It is well known that the Grassmann manifolds are  a fundamental object for the problems which develop an interaction between algebraic geometry and algebraic topology. The subject of toric geometry are algebraic manifolds which consist of the closure  of one orbit of algebraic torus.  The equivariant structure of these manifolds can be effectively described in terms of combinatorial structure of a moment polytope. The issue which naturally arises in this context is to study   algebraic manifolds  with algebraic torus action  whose family of algebraic torus orbits   gives, 
in a way,  good stratification of a manifold. The first such nontrivial examples are the Grassmann manifolds $G_{n,2}$ which can be as well  interpreted as the space of all projective lines in $\C P^{n-1}$. In addition, from  the results of Gel'fand-Serganova~\cite{GS}  it follows  that the structure of  the algebraic torus orbit  stratification, that is the gluing of  the strata, in the case of $G_{n,2}$  can be explicitly described. This is  no more true for the Grassmannians $G_{n,k}$,  $n\geq 7$,  $k\geq 3$,  as it is demonstrated on the example   $G_{7,3}$ in~\cite{GS}, see also~\cite{BT-2nk}.   Further on, we generalized  this property of $G_{n,2}$  and introduced  in~\cite{BT-2nk}  the notion of the CW complex of the strata. 

In this paper we  establish the connection between the well-known constructions  from algebraic geometry based on the notion of  wonderful compactification  and the results on equivariant algebraic topology of the Grassmann manifolds.

The motivation to tackle  this issue has come  to us from the problem of   describing  the universal space of parameters $\mathcal{F}_{n}$  for the canonical $T^n$-action on  a Grassmannian $G_{n,2}$.  The universal spaces of parameters have been introduced in our theory of $(2m, k)$-manifolds,  see~\cite{BT-2nk}. This theory focuses on  smooth $2m$-dimensional manifolds with a smooth action of $k$-dimensional compact torus,  which satisfy   given set of axioms. The important class of such manifolds  is consisted  of the manifolds for which an action of a compact torus is induced by the  action of an algebraic torus. Among these manifolds a beautiful role  have  the manifolds $G_{n,2}$,   which are $(2m,k)$-manifold for $m=2(n-2)$ and $k=n-1$.  The universal space of parameters $\mathcal{F}$  for a $(2m,k)$-manifold $M^{2m}$ with an effective action of the compact torus $T^k$, $k\leq m$, is a compactification of the space of parameters $F$ of the main stratum.  Its general definition is given in~\cite{BT-2nk}.
The  universal space of parameters  for the Grassmannian $G_{5,2}$  in regard with the canonical action of $T^5$  is defined and explicitly described in~\cite{BT-2}. More precisely,  it is proved that in this case the  universal space of parameters is  the blow up of the cubic surface $c_1c_2^{'}c_3=c_1^{'}c_2c_{3}^{'}$ in $(\C P^{1})^{3}$ at one point, being the same as the blow up of $\C P^{2}$ at four points in general position. This space is in algebraic geometry known as the del Pezzo surface of degree $5$, see also ~\cite{HS}. Further on,     in~\cite{klem}  it is proved  that the  Grothendieck-Knudsen compactification  of the moduli space $\mathcal{M}(0, n)$  of genus zero curves with $n$ marked distinct points
 provides   universal space  of parameters for $G_{n,2}$ in regard to the canonical $T^n$ -action.    
From the one side,  the method  in~\cite{klem}  appeals  on the Gel'fand MacPherson correspondence ~\cite{GeM}  between the space $F_n$ and configurations of all pairwise distinct points in $(\C P^1)^{n}$ up to automorphism of $\C P^1$ and, from the other side,  on the description of the Grothendieck-Knudsen compactification $\overline{\mathcal{M}}(0,n)$  of the moduli space $\mathcal{M}(0, n)$  of genus zero curves with $n$ marked distinct points via cross ratios as  described by McDuff-Salamon  in~\cite{MS}. There is also the notion of the Chow quotient $G_{n,2}\!/\!/ (\C ^{\ast})^{n}$ defined by  Kapranov in~\cite{Kap} for which he  proved to coincide  with  the space $\overline{\mathcal{M}}(0,n)$. It is well known fact that $\overline{\mathcal{M}}(0,5)$ is the del Pezzo surface of degree $5$, while  algebro-geometric characterization of the space $\overline{\mathcal{M}}(0,n)$  for $n>5$ is an open well known problem.
In the paper~\cite{BV} it has been shown that   the problem of   algebro-topological  characterization of these spaces is related to  well known problems of complex cobordisms theory.

 In this paper we provide new  description of the universal space of parameters $\mathcal{F}_{n}$ for $G_{n,2}$  which   comes purely    from equivariant topology of the  Grassmann manifolds $G_{n,2}$. We obtain a smooth, compact manifold for which we prove to be diffeomorphic to the moduli space $\overline{\mathcal{M}}(0,n)$ of genus zero stable curves with $n$ marked distinct points, that is to the Chow quotient $G_{n,2}\!/\!/ (\C ^{\ast})^{n}$.

In this context, the main task  to be considered can be formulated as follows: given an  algebraic variety $X$  in  $(\C P^1)^{N}$ which is  an open subset  of its closure $\bar{X}$ in $(\C P^{1})^{N}$,   and a family of its  automorphism $\mathcal{A}(X)$, to  find a compactification $\mathcal{X}$ of $X$  for which there is the projection $p: \mathcal{X}\to \bar{X}$ which  restriction on $X$ is an identity, and  any automorphism $f\in \mathcal{A}(X)$ extends to the automorphism of $\mathcal{X}$.  

Here we take  $X$ to be the coordinate record  of the space of parameters $F_n$ of the main stratum in a   standard chart for  $G_{n,2}$ which is defined by the Pl\"ucker coordinates, while for  $\mathcal{A}(F_n)$ we take all automorphism of $F_n$ induced by the coordinate changes between  a fixed chart and  all other charts.  In order to find $\mathcal{F}_n$ we resolve the   singularities of the closure $\bar{F}_n \subset (\C P^{1})^{N}$, which arise in the context of   extension of the automorphisms from $\mathcal{A}(F_n)$, using the constructions of algebraic geometry known as    wonderful compactification. We prove that in  such  a way  the smooth manifold $\mathcal{F}_n$  is obtained, which is diffeomorphic to the Chow quotient    $G_{n,2}\!/\!/ (\C ^{\ast})^{n}$ that is,  to the Grothendieck-Knudsen compactification of the space $\mathcal{M}(0,n)$. 

The wonderful compactification of a complex manifold $M$ is a compact manifold $\tilde{X}$, such that $D=\tilde{X}\setminus M$ is a divisor with  normal crossings in  $\tilde{X}$ whose irreducible components are smooth,  and any number of connected components of $D$ intersect transversally. Such strong conditions on compactification turn out to be of essential importance for many algebraic and geometric problems,   such as  the problems of enumerative algebraic geometry and Schubert enumerative problems, description of the rational homotopy type of $M$, its mixed Hodge structure, the  Chow ring,  etc.   The notion of wonderful compactification  firstly appeared  in the paper~\cite{CP} of De Concini-Procesi  in the context  of  an equivariant compactification of the symmetric spaces $G/H$, see also~\cite{LV} and~\cite{T} for a comprehensive overview of the subject.  This idea has been further developed and applied  in many directions, such as  Fulton-MacPherson compactification  in~\cite{FM}, De Concini-Procesi wonderful models~\cite{CP},~\cite{CP2}, the wonderful compactification of Li~\cite{LL} and  more recently  the  projective wonderful models of toric arrangements  by De Concini-Gaiffi and others~\cite{CG1},~\cite{CG2}, ~\cite{CGP}.

Furthermore, this paper  finds    the advantage of wonderful compactification for the description of the equivariant topology of the Grassmannians $G_{n,2}$ for the canonical $T^n$-action. More explicitly, it turns out that  the wonderful compactification of arrangements of subvarieties from~\cite{LL} can be successfully applied for the compactification of the space of parameters $F_n$ of the main stratum $W_n\subset G_{n,2}$. In this way,  we     obtain the  smooth manifold $\mathcal{F}_{n}$  which enables us to construct the   
model $U_n = \mathcal{F}_{n}\times \Delta_{n,2}$ for the orbit space $G_{n,2}/T^n$ meaning that there exists a continuous surjection $p_n : U_n \to G_{n,2}/T^n$, see~\cite{BT-n2}.

This Chow quotient is the compactification of the orbit space $W_n/(\C ^{\ast})^{n}$ for the main stratum $W_n\subset G_{n,2}$  given in terms of the Chow variety  for $G_{n,2}$, see~\cite{Kap},~\cite{GKZ}.  The build up components for this compactification are described in~\cite{Kap} in terms of the maximal algebraic torus orbits in $G_{n,2}$. In this paper, we push this further and   show that   the build up components of $G_{n,2}\!/\!/ (\C ^{\ast})^{n}$ can be described in terms of the CW complex of admissible polytopes and their spaces of parameters.  In this description, following the ideas of the Chow quotient from~\cite{Kap}, the key role have the cort\'eges of $(n-1)$-dimensional  admissible polytopes which give polyhedral decompositions for $\Delta_{n,2}$.  We relate this to  our description  of the space $\mathcal{F}_{n}$ and provide an explicit expression  for  the build up  components of  $G_{5,2}\!/\!/ (\C ^{\ast})^{5}$.

\section{Space of parameters $F_n$ of the main stratum  for   $G_{n,2}$}

\subsection{ The embedding of   $F_n$ in $(\C P^{1})^{N}$}
First we  recall  from~\cite{BT-2} and~\cite{BT-2nk} the notions of the main stratum $W_n$ in $G_{n,2}$ as well as  its space of parameters  $F_n$  as a background introduction into the objects we are going to consider.

The main stratum $W_n\subset G_{n,2}$ is characterized by the condition that its points have all  non-zero  Pl\"ucker coordinates, which implies that  $W_n$  belongs to any standard chart $M_{ij} = \{ L\in G_{n,2} |  P^{ij}(L)\neq 0\}$, $1\leq i<j\leq n$ of a  Grassmann $G_{n,2}$.  The main stratum $W_n$ is invariant under the canonical action of the algebraic torus $(\C ^{\ast})^{n}$ and the orbit space 
$F_{n} = W_n/(\C ^{\ast})^{n}$ is said to be the space of parameters for $W_n$.  In a fixed chart, the main stratum  is given    by the following  system of equations:
\begin{equation}\label{orbit}
c_{ij}^{'}z_iw_{j} = c_{ij}z_jw_{i}, \;\; 3\leq i<j\leq n,
\end{equation} 
where the parameters $(c_{ij}:c_{ij}^{'})\in \C P^1$  and 
$c_{ij}, c_{ij}^{'}\neq 0$ and $c_{ij}\neq c_{ij}^{'}$ for all $3\leq i<j\leq n$.  

The number of parameters $(c_{ij} : c_{ij}^{'})$  is $N={n-2 \choose 2}$ and  from~\eqref{orbit} it follows that these parameters satisfy the following equations:
\begin{equation}\label{relat}
 c_{ij}^{'}c_{ik}c_{jk}^{'} = c_{ij}c_{ik}^{'}c_{jk}, \;\; 3\leq i<j<k\leq n.
\end{equation}
The number of these equations is ${n-2\choose 3}$.

We see   from~\eqref{relat} that  the parameters $(c_{ij}: c_{ij}^{'})$ satisfy the following relations:
\begin{equation}\label{relatmain1}
(c_{ij}:c_{ij}^{'}) = (c_{3i}^{'}c_{3j}: c_{3i}c_{3j}^{'}), \;\; 4\leq i<j\leq n.
\end{equation}
\begin{equation}\label{relatmain2}
 (c_{3i}:c_{3i}^{'})\neq (c_{3j}:c_{3j}^{'}), \;\;  4\leq i<j\leq n.
\end{equation}
The number of these relations  is $M={n-3\choose 2}$.
 Therefore, we obtain  that the space $F_n$ is homeomorphic  to the space
\begin{equation}\label{spmain}
F_n\cong  (\C P^{1}_{A})^{n-3}\setminus \Delta, 
\end{equation}
where $A=\{(0:1), (1:0), (1:1)\}$  and $\Delta = \cup _{3\leq i<j\leq n}\Delta _{ij}$ for the diagonals $\Delta _{ij} = \{((c_{34}:c_{34}^{'}), \ldots , (c_{n-1,n}, c_{n-1,n}^{'}))\in (\C P^{1}_{A})^{n-3} | (c_{3i}:c_{3i}^{'}) =  (c_{3j}:c_{3j}^{'})\}$.

The relations ~\eqref{relat} give  an embedding of  the space  $F_n$  into $(\C P^{1})^{N}$, $N={n-2\choose 2}$, that is:
\begin{lem}
The space of parameters $F_n$ of the main  stratum $W_n$ is given by~\eqref{relat} as a subspace in $(\C P^{1})^{N}$, $N={n-2\choose 2}$.
\end{lem}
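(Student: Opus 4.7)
The plan is to construct an explicit embedding $F_n \hookrightarrow (\C P^1)^N$ from the parameters $(c_{ij}:c_{ij}')$ appearing in~\eqref{orbit} and identify its image with the locus of the relations~\eqref{relat}.

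First, I would fix the standard chart $M_{12}$ and represent every $L\in W_n$ by the $2\times n$ matrix whose first two columns form the identity block and whose $i$-th column is $(z_i,w_i)^T$ for $i\geq 3$. Since every Pl\"ucker coordinate $P^{ij}(L)$ is non-zero, each $z_i,w_i$ is non-zero and $z_iw_j-z_jw_i\neq 0$ for all $3\leq i<j\leq n$, so setting $(c_{ij}:c_{ij}'):=(z_iw_j:z_jw_i)$ as in~\eqref{orbit} defines a map $\phi:W_n\to (\C P^1)^N$ whose image avoids $(0:1),(1:0),(1:1)$ in every factor. A direct check shows the residual $(\C^*)^n$-action rescales $z_iw_j$ and $z_jw_i$ by the same factor, so $\phi$ is torus invariant and descends to $\bar\phi:F_n\to(\C P^1)^N$; a short computation then yields $c_{ij}c_{ik}'c_{jk}=z_iz_jz_kw_iw_jw_k=c_{ij}'c_{ik}c_{jk}'$, confirming that the image satisfies~\eqref{relat}.

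Next I would show that $\bar\phi$ is a bijection onto the locus cut out by~\eqref{relat} (together with the open conditions $c_{ij},c_{ij}'\neq 0$ and $c_{ij}\neq c_{ij}'$). Injectivity is immediate: two matrices producing identical ratios $(z_iw_j:z_jw_i)$ have columns differing only by scaling, hence lie in a single torus orbit. For surjectivity, given a tuple $(\alpha_{ij}:\alpha_{ij}')$ satisfying~\eqref{relat}, I would specialize to $i=3$ (which recovers exactly~\eqref{relatmain1}) to reduce the data to the free parameters $(\alpha_{3i}:\alpha_{3i}')$ for $i=4,\ldots,n$, and then exhibit the explicit preimage whose third column is $(1,1)^T$ and whose $i$-th column ($i\geq 4$) is $(\alpha_{3i}',\alpha_{3i})^T$.

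The main point I expect to require care is the surjectivity step: one must verify that the equations in~\eqref{relat} indexed by $3\leq i<j<k\leq n$ with $i\geq 4$ are genuine consequences of the $i=3$ subequations, so the reconstruction above is unobstructed and well-defined. This reduces to a direct substitution using~\eqref{relatmain1}. Once this is established, the homeomorphism~\eqref{spmain} already obtained in the preceding discussion confirms that the image has the correct dimension $n-3$, and the identification of $F_n$ with the subspace of $(\C P^1)^N$ defined by~\eqref{relat} follows.
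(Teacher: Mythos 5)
Your proposal is correct, and it matches the reasoning the paper uses but leaves mostly implicit: the paper states this lemma without a separate proof, treating it as an immediate consequence of the derivations of~\eqref{orbit}--\eqref{spmain}. You have simply made this explicit by constructing the map $(c_{ij}:c_{ij}')=(z_iw_j:z_jw_i)$, verifying torus invariance and the relations~\eqref{relat}, and exhibiting the inverse via the normalization $z_3=w_3=1$, $z_i=\alpha_{3i}'$, $w_i=\alpha_{3i}$, which is precisely the reduction to~\eqref{relatmain1},~\eqref{relatmain2} recorded in the paper. One small remark: the consistency worry you flag at the end is lighter than you suggest --- to see that the constructed preimage maps back to the given tuple one only uses the relations~\eqref{relat} with first index $3$; the higher-index relations then hold automatically because the image of any point of $W_n$ satisfies all of~\eqref{relat}.
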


Moreover,  we see that $F_n$ is an open algebraic manifold in $\bar{F}_{n}\subset (\C P^{1})^{N}$ given by the intersection of the cubic hypersurfaces~\eqref{relat} and  the conditions that $(c_{ij}^{'}:c_{ij})\in \C P^1\setminus A$, where  $A=\{(0:1), (1:0), (1:1)\}$.  The dimension of $F_n$ is $2(n-3)$ what is exactly equal to $2(N-M)$.

\begin{prop}\label{smooth}
The compactification $\bar{F}_{n}$ of $F_{n}$ in $(\C P^{1})^{N}$ is a smooth algebraic variety given by
\[
\{((c_{ij} : c_{ij}^{'}))_{3\leq i<j\leq n}\in (\C P^{1})^{N} \; | \; c_{ij}^{'}c_{ik}c_{jk}^{'} = c_{ij}c_{ik}^{'}c_{jk}, \; 3\leq i<j<k\leq n\}.
\]
\end{prop}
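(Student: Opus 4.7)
The plan is to establish in order (i) that $\bar{F}_n$ coincides with the variety $V_n\subset(\C P^1)^N$ cut out by the displayed cubic equations, and (ii) that $V_n$ is smooth.

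For (i), the inclusion $\bar{F}_n\subseteq V_n$ is immediate from~\eqref{relat} together with the closedness of $V_n$. For the reverse, I would introduce the rational parametrization
\[
\psi:\C P^{n-3}\dashrightarrow(\C P^1)^N,\qquad (X_3:\cdots:X_n)\longmapsto\bigl((X_j:X_i)\bigr)_{3\le i<j\le n},
\]
which is well defined off the codimension-two locus $\bigcup_{i<j}\{X_i=X_j=0\}$. A direct substitution verifies $\mathrm{im}\,\psi\subseteq V_n$, and on the open set where all $X_k\neq 0$ with ratios $X_i/X_3$ pairwise distinct and avoiding $\{0,1\}$, the map $\psi$ is a bijection onto $F_n$ via~\eqref{relatmain1}--\eqref{relatmain2}. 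Moreover, the torus intersection $V_n\cap(\C^*)^N$ coincides with $\psi((\C^*)^{n-3})$, so $V_n$ is the closure in $(\C P^1)^N$ of an $(n-3)$-dimensional subtorus of $(\C^*)^N$. In particular $V_n$ is irreducible of dimension $n-3$, which forces $V_n=\bar{F}_n$.

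This identification endows $V_n$ with the structure of an $(n-3)$-dimensional toric variety for the subtorus $\psi((\C^*)^{n-3})\subset(\C^*)^N$. Its cocharacter lattice $L^\perp\subset\Z^N$ is generated by the \emph{star vectors} $v_k=\sum_{i<k}e_{ik}-\sum_{k<j}e_{kj}$, $k=3,\ldots,n$, subject to the single relation $\sum_k v_k=0$; equivalently, $L^\perp$ is the coboundary sublattice of the complete graph $K_{n-2}$ on vertex set $\{3,\ldots,n\}$. The fan of $V_n$ is then obtained by intersecting the fan of $(\C P^1)^N$ with $L^\perp\otimes\R$, and smoothness of $V_n$ reduces to the combinatorial assertion that every maximal cone of the restricted fan is generated by a subset of a $\Z$-basis of $L^\perp$.

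The principal obstacle is verifying this combinatorial assertion uniformly. I would describe maximal cones of the restricted fan in terms of signed subgraphs of $K_{n-2}$ (each edge $(i,j)$ contributing a ray in either the $c_{ij}=0$ or the $c_{ij}'=0$ direction) and use the total unimodularity of the vertex-edge incidence matrix of $K_{n-2}$ to conclude that the primitive generators of each such cone extend to a $\Z$-basis of $L^\perp$. As a sanity check, for $n=5$ this description recovers $V_5$ as the smooth $(1,1,1)$-hypersurface in $(\C P^1)^3$, namely the del Pezzo surface of degree six that appears, after a single further blowup, as the universal space of parameters $\mathcal{F}_5$ of~\cite{BT-2}; the general $V_n$ will be modified analogously by the wonderful compactification in the sections that follow.
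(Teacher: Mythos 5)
Your approach is genuinely different from the paper's. The paper's proof is a direct Jacobian computation: it considers the gradients of the cubics $f_{ijk}=c_{ij}c_{ik}'c_{jk}-c_{ij}'c_{ik}c_{jk}'$ and asserts (``it can be directly verified'') that at every point of $\bar F_n$ exactly $M=\binom{n-3}{2}$ of them are independent, so $\bar F_n$ is a smooth complete intersection of codimension $M$. Your proposal instead recognizes $\bar F_n$ as a toric variety for the subtorus $T'=\psi((\C^*)^{n-3})$ — indeed, one can identify it with the $(n-3)$-dimensional permutohedral (Losev--Manin) variety, whose fan is the braid arrangement fan of type $A_{n-3}$; this is a conceptually illuminating viewpoint and recovers the $n=5$ case (degree-$6$ del Pezzo) cleanly.

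However, as written the proposal has genuine gaps. \emph{First}, from $V_n\cap(\C^*)^N=T'$ you immediately write ``so $V_n$ is the closure of a subtorus'', but this only gives $\overline{T'}\subseteq V_n$; a priori the variety cut out by the binomials could have extra components supported on the boundary $\{c_{ij}c_{ij}'=0\}$. Irreducibility of $V_n$ (equivalently, primality or at least radicality of the binomial ideal) therefore needs its own argument, and without it the step ``which forces $V_n=\bar F_n$'' is unjustified. \emph{Second}, the assertion ``the fan of $V_n$ is obtained by intersecting the fan of $(\C P^1)^N$ with $L^\perp\otimes\R$'' is exactly Tevelev's tropical compactification condition: it is \emph{not} automatic that the closure of a subtorus is the normal toric variety of the restricted fan, and proving it here is essentially equivalent to the normality/smoothness you are trying to establish. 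One way to close both gaps at once is to work chart by chart: for a sign pattern $\varepsilon\in\{0,1\}^N$ read off a tournament on $\{3,\ldots,n\}$, observe that on $V_n$ the forced orientations (where some $c_{ij}$ or $c_{ij}'$ vanishes) are always acyclic because a directed $3$-cycle contradicts $f_{ijk}=0$, hence every point lies in a chart whose tournament is transitive, and in such a chart $V_n$ is the graph of a polynomial map over the $n-3$ ``consecutive'' coordinates, hence $\cong\mathbb{A}^{n-3}$; this also yields irreducibility. \emph{Third}, a small but real slip: the \emph{undirected} vertex-edge incidence matrix of $K_{n-2}$ is totally unimodular only for bipartite graphs, so it fails for $n\ge 6$; you must mean the \emph{directed} (signed) incidence matrix, whose total unimodularity is what underlies the regularity of the graphic/cographic matroid and the smoothness of the braid fan with respect to $L^\perp$. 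With those points repaired, your toric argument is a valid and more structural alternative to the paper's unelaborated rank count, but the proposal in its current form does not yet constitute a complete proof.
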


\begin{proof}
We consider the gradients of the functions $f_{ijk} = c_{ij}c_{ik}^{'}c_{jk} - c_{ij}^{'}c_{ik}c_{jk}^{'}$ which define $\bar{F}_{n}$.
It can be directly verified   that   there are   $M={n-3\choose 2}$ linearly independent vectors  among these gradients at any  point of $\bar{F}_{n}$.  This  implies that $\bar{F}_{n}$ is a smooth algebraic variety of real dimension $2(N-M)$ for $N={n-2\choose 2}$ and $M={n-3\choose 2}$.
\end{proof}


\subsection{ Requirements on   the compactification  of  $F_{n}$}

The  motivation for our approach  in finding the compactification for the space of parameters $F_{n}$ of the main stratum $W_{n}$ in  $G_{n,2}$ comes from our work on description of  topology of the orbit space $G_{n,2}/T^n$. Using Pl\"ucker coordinates one can define  the stratification of a  Grassmannian $G_{n,2}$, that is  $G_{n,2}=\cup _{\sigma}W_{\sigma}$, where $\sigma \subset \{\{i,j\}\subset \{1, \ldots ,n\}, \; i\neq j\}$. The strata $W_{\sigma} = \{ L\in G_{n,2} | P^{ij}(L)\neq 0, \;\ \text{for}\; \{i,j\} \in \sigma\}$ are pairwise disjoint and $T^n$-invariant, even $(\C ^{\ast})^{n}$-invariant. This  induces the stratification of the orbit space
\[
G_{n,2}/T^n= \cup_{\sigma}W_{\sigma}/T^n.
\]
  The main stratum $W_{n}$   is a dense set in  $G_{n,2}$ and the torus $T^{n-1}=T^{n}/S^1$ acts freely on it. In~\cite{BT-2} we proved that 
\[
W_{n}/T^{n}\cong \stackrel{\circ}{\Delta}_{n,2}\times F_{n},
\]
 where $\Delta_{n,2}$ is the hypersimplex. Moreover, for any stratum $W_{\sigma}$ we proved that 
\[
W_{\sigma}/T^{n}\cong \stackrel{\circ}{P}_{\sigma}\times F_{\sigma},
\]
 where $P_{\sigma}$ is a subpolytope in $\Delta_{n,2}$  given by $P_{\sigma} = \text{convhull}\{\Lambda _{ij} = e_i+e_j\in \R ^{n}, \{i, j\}\in \sigma\}$ and $F_{\sigma} = W_{\sigma}/(\C ^{\ast})^{n}$. Note that all this implies that to any stratum $W_{\sigma}$   it can be assigned  the subspace $\tilde{F}_{\sigma}\subset \mathcal{F}_{n}$  for  any compactification $\mathcal{F}_{n}$ for $F_{n}$.  In order to describe the orbit space $G_{n,2}/T^n$ we look for such compactification $\mathcal{F}_{n}$ for $F_n$ for which $\mathcal{F}_{n}= \cup _{\sigma}\tilde{F}_{\sigma}$ and  for any stratum $W_{\sigma}$  there exists the  projection $p_{\sigma} : \tilde{F}_{\sigma}\to F_{\sigma}$. In finding such compactification    we start by noting the following:   if consider  a stratum $W_{\sigma}$  in a chart $M_{ij}$
 and assign to it  the space $\tilde{F}_{\sigma, ij}\subset \mathcal{F}_{n}$ using the fact that  $W_{\sigma}/T^{n}$ is in the boundary of $W_{n}/T^n$, then the  space $\tilde{F}_{\sigma, ij}$ must not depend on a chart $M_{ij}$ such that $W_{\sigma}\subset M_{ij}$. Thus, we start by considering $\bar{F}_{n}\subset (\C P^{1})^{N}$ and  all possible $\tilde{F}_{\sigma, ij}\subset \bar{F}_{n}$,  and make  corrections along  those $\tilde{F}_{\sigma, ij}$ which are not independent of a chart $M_{ij}$.

\subsection{ Automorphisms  of $F_n$ induced by the  changes of coordinates}
The  main stratum $W_n$ belongs to any  chart defined by the Pl\"ucker coordinates. It follows from~\eqref{orbit} and~\eqref{relat} that  the transition maps  between the charts produce  the automorphisms  of the space of parameters $F_n$ of the main stratum.

We deduce explicitly  the automorphisms  of  $F_n$  induced by the transition maps  between the charts $M_{12}$ and $M_{ij}$, $i<j$, $\{1,2\}\neq \{i,j\}$.  
Denote   the local coordinates in the chart $M_{12}$  by
\[
z_3, \ldots , z_{n},w_3, \ldots, w_{n}
\]
  and  let 
\[
z_{1}^{'}, \ldots, z_{i-1}^{'}, z_{i+1}^{'}, \ldots , z_{j-1}^{'}, z_{j+1}^{'}, \ldots, z_n^{'},\]
\[
 w_{1}^{'}, \ldots, w_{i-1}^{'}, 
w_{i+1}^{'}, \ldots , w_{j-1}^{'}, w_{j+1}^{'}, \ldots, w_n^{'}
\]
be  the local coordinates in the chart $M_{ij}$.

 Let further $(c_{pq}: c_{pq}^{'})$, $3\leq p<q\leq n$ be the coordinate record of the space $F_n$ in the chart $M_{12}$ and $(d_{kl}: d_{kl}^{'})$, $1\leq k<l\leq n$, $k,l\neq i,j$ be the coordinate record of $F_n$ in a chart $M_{ij}$.  

We differentiate the  following cases.

1) $i=1$, $3\leq j\leq n$, that is we consider the chart $M_{1j}$. In this case we have that
\begin{equation}\label{relcoord}
z_{2}^{'} = -\frac{z_j}{w_j}, \;\; z_{k}^{'} = \frac{z_{k}w_{j}-z_{j}w_{k}}{w_j}, \; k\geq 3,
\end{equation}
\[
w_{2}^{'} = \frac{1}{w_{j}}, \;\; w_{k}^{'} = \frac{w_{k}}{w_{j}}, \; k\geq 3.
\]

\begin{lem}\label{jedan}
The coordinates  $(c_{pq}: c_{pq}^{'})$ and $(d_{kl}:d_{kl}^{'})$  of the space of parameters $F_n$ in the charts $M_{12}$ and $M_{1j}$, $ 3\leq j\leq n$ are related by
\[
(d_{2l} : d_{2l}^{'}) = (c_{jl}; c_{jl}-c_{jl}^{'}), \;\; 3\leq l\leq n, \; l\neq j
\]
\[
(d_{kl}: d_{kl}^{'}) = (c_{jl}(c_{jk}-c_{jk}^{'}): c_{jk}(c_{jl}-c_{jl}^{'})) , \;\;  3\leq k<l\leq n, \; k, l\neq j.
\]
The second expression can be written as
\[
(d_{kl} : d_{kl}^{'}) = (c_{kl}c_{jl}^{'}(c_{jk}-c_{jk}^{'}): c_{kl}^{'}c_{jk}^{'}(c_{jl}-c_{jl}^{'})).
\]
\end{lem}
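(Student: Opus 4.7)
The plan is to reduce the lemma to a direct substitution in the affine (Pl\"ucker) coordinates of the main stratum and then re-express projectively. From equation~\eqref{orbit}, on $W_n$ viewed in the chart $M_{12}$ one has the projective identity $(c_{pq}:c_{pq}')=(z_pw_q:z_qw_p)$ for $3\le p<q\le n$; I take this same formula as the \emph{definition} of $(c_{pq}:c_{pq}')$ for an arbitrary (not necessarily ordered) pair, which is forced on us because the lemma writes $c_{jl}$ with no assumption on whether $j<l$ or $j>l$. Analogously in $M_{1j}$, $(d_{kl}:d_{kl}')=(z_k'w_l':z_l'w_k')$ for $k,l\in\{2,\ldots,n\}\setminus\{j\}$.

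For the first assertion ($k=2$), I substitute the transition formulas~\eqref{relcoord} and compute
\[
z_2'w_l'=-\frac{z_jw_l}{w_j^2},\qquad z_l'w_2'=\frac{z_lw_j-z_jw_l}{w_j^2}.
\]
Cancelling the common factor $1/w_j^2$ and multiplying both sides of the resulting ratio by $-1$ gives $(z_jw_l:z_jw_l-z_lw_j)=(c_{jl}:c_{jl}-c_{jl}')$, which is the claim. For the second assertion the same substitution yields $z_k'w_l'=(z_kw_j-z_jw_k)w_l/w_j^2$ and $z_l'w_k'=(z_lw_j-z_jw_l)w_k/w_j^2$; cancelling $1/w_j^2$, multiplying both sides by $-z_j$, and regrouping produces
\[
\bigl(z_jw_l(z_jw_k-z_kw_j):z_jw_k(z_jw_l-z_lw_j)\bigr)=\bigl(c_{jl}(c_{jk}-c_{jk}'):c_{jk}(c_{jl}-c_{jl}')\bigr).
\]

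For the alternative expression I invoke the cubic relation~\eqref{relat} in the form $c_{jl}c_{jk}'c_{kl}'=c_{jk}c_{jl}'c_{kl}$, which is immediate from the projective identities $c_{ab}=z_aw_b$, $c_{ab}'=z_bw_a$ regardless of the ordering of $a$ and $b$; this lets me replace $(c_{jl}:c_{jk})$ by $(c_{kl}c_{jl}':c_{kl}'c_{jk}')$ and then rescale by the common factor to obtain the stated second form. The whole argument is purely computational, and the only mild subtlety is the indexing convention for $c_{jl}$ when $j>l$, which I resolve at the outset by extending $(c_{jl}:c_{jl}')=(z_jw_l:z_lw_j)$ to all ordered pairs. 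After that each equality reduces to a short projective identity, and the principal risk is merely a sign or factor miscount rather than any genuine obstacle.
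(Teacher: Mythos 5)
Your argument is correct and runs along the same lines as the paper's: both start from the defining relations of the main stratum in the chart $M_{1j}$, substitute the transition formulas~\eqref{relcoord}, and read off $(d_{kl}:d_{kl}')$ as a projective ratio of monomials in $z,w$. Two small ways in which you go a little further: you spell out the indexing convention $(c_{jl}:c_{jl}')=(z_jw_l:z_lw_j)$ for unordered pairs (which the paper uses implicitly), and you actually verify the alternative expression via the cubic relation $c_{jl}c_{jk}'c_{kl}'=c_{jk}c_{jl}'c_{kl}$, a step the paper leaves unproved.
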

\begin{proof}
In the chart $M_{1j}$ the main stratum writes as
\[
d_{kl}^{'}z_{k}^{'}w_{l}^{'} = d_{kl}z_{l}w_{k}, \;\; 2\leq k<l\leq n, \; k,l\neq j.\]

Substituting the relations~\eqref{relcoord} into these equations   of the main stratum, we obtain 
\[
- d_{2l}^{'}\cdot  \frac{z_j}{w_j}\cdot \frac{w_{l}}{w_{j}} = d_{2l}\cdot \frac{z_lw_j-z_jw_l}{w_j}\cdot \frac{1}{w_j},
\]
which implies 
\[
d_{2l}^{'} = -d_{2l} (\frac{z_lw_j}{z_jw_l}-1)= d_{2l}(1-\frac{c_{jl}^{'}}{c_{jl}}),
\]
that is
\[
(d_{2l} : d_{2l}^{'}) = (c_{jl}: c_{jl} - c_{jl}^{'}).
\] 
For $k\geq 3$ in an analogous way we obtain
\[
d_{kl}^{'}w_{l}(z_kw_j-z_jw_k) = d_{kl}w_{k}(z_lw_j - z_jw_l),
\]
which implies
\[
d_{kl}^{'} =d_{kl}\cdot  \frac{w_k}{w_l}\cdot  \frac{z_jw_l (\frac{c_{jl}^{'}}{c_{jl}}-1)}{z_jw_k(\frac{c_{jk}^{'}}{c_{jk}}-1)},
\] 
that is 
\[
(d_{kl}: d_{kl}^{'}) = (c_{jl}(c_{jk}^{'}-c_{jk}): c_{jk}(c_{jl}^{'}-c_{jl})).
\]
\end{proof}

2) $i=2$, $ 3\leq j\leq n$, that is we consider the chart $M_{2j}$. We have that 
\[
z_{1}^{'} = -\frac{w_j}{z_j}, \;\; z_{k}^{'} = \frac{z_{j}w_{k}-z_{k}w_{j}}{z_j}, \; k\geq 3,
\]
\[
w_{1}^{'} = \frac{1}{z_{j}}, \;\; w_{k}^{'} = \frac{z_{k}}{z_{j}}, \; k\geq 3.
\]
Substituting this  into  equations of the main stratum written in the chart $M_{2j}$ we obtain:
\begin{lem}\label{dva}
The coordinates $(c_{pq}: c_{pq}^{'})$   and $(d_{kl}:d_{kl}^{'})$  of the space of parameters $F_n$ in the charts $M_{12}$ and $M_{2j}$, $ 3\leq j\leq n$ are related by
\[
(d_{1l} : d_{1l}^{'}) = (c_{jl}^{'}; c_{jl}^{'}-c_{jl}), \;\; 3\leq l\leq n, \; l\neq j,
\]
\[
(d_{kl}: d_{kl}^{'}) = (c_{jl}^{'}(c_{jk}-c_{jk}^{'}): c_{jk}^{'}(c_{jl}-c_{jl}^{'})) 
\]
\[
= ( c_{jl}c_{kl}^{'}(c_{jk}-c_{jk}^{'}): c_{jk}c_{kl}(c_{jl}-c_{jl}^{'})), \;\; 3\leq k< l\leq n, \; k,l\neq j
\]
\end{lem}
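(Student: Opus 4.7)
The plan is to mirror the proof of \lemref{jedan} with the roles of the $z$- and $w$-coordinates interchanged, which accounts for the systematic replacement of $c_{jl}$ by $c_{jl}^{'}$ (and vice versa) in the formulas. The input data are the coordinate-change formulas listed just above the statement, together with the main-stratum equations \eqref{orbit} in the chart $M_{12}$, which supply the identifications that convert ratios of $z_p,w_p$ into ratios of parameters $(c_{pq}:c_{pq}^{'})$.

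First I would write the defining relations of the main stratum in $M_{2j}$ as
\[
d_{kl}^{'}z_k^{'}w_l^{'}=d_{kl}z_l^{'}w_k^{'},\qquad 1\le k<l\le n,\ k,l\ne j,
\]
and substitute the coordinate-change formulas from the passage preceding the lemma. For the case $k=1$, using $z_1^{'}=-w_j/z_j$, $w_1^{'}=1/z_j$, $z_l^{'}=(z_jw_l-z_lw_j)/z_j$, $w_l^{'}=z_l/z_j$, and the identity $z_jw_l-z_lw_j=z_lw_j(c_{jl}-c_{jl}^{'})/c_{jl}^{'}$ (a direct consequence of \eqref{orbit} in $M_{12}$), the factors of $z_j,w_j,z_l$ cancel and one reads off $(d_{1l}:d_{1l}^{'})=(c_{jl}^{'}:c_{jl}^{'}-c_{jl})$.

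For $3\le k<l\le n$ with $k,l\ne j$, I would substitute the corresponding formulas $z_k^{'}=(z_jw_k-z_kw_j)/z_j$, $w_k^{'}=z_k/z_j$, and the same for $l$. Applying the rewriting $z_jw_m-z_mw_j=z_mw_j(c_{jm}-c_{jm}^{'})/c_{jm}^{'}$ in turn for $m=k$ and $m=l$, the common factors $z_kz_lw_j$ and $z_j^{2}$ cancel and one is left with
\[
d_{kl}^{'}c_{jl}^{'}(c_{jk}-c_{jk}^{'})=d_{kl}c_{jk}^{'}(c_{jl}-c_{jl}^{'}),
\]
which is exactly the first form of the claim. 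The second, equivalent form is obtained by applying relation \eqref{relat} to the triple $\{j,k,l\}$ (with the correct ordering of indices) to trade $c_{jl}^{'}/c_{jk}^{'}$ for $c_{jl}c_{kl}^{'}/(c_{jk}c_{kl})$, which replaces $(c_{jl}^{'},c_{jk}^{'})$ on the right-hand side by $(c_{jl}c_{kl}^{'},c_{jk}c_{kl})$.

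No serious obstacle is expected: the computation is entirely parallel to that of \lemref{jedan}. The only point requiring care is the final reduction to the second form, where one must check that relation \eqref{relat} is applied with indices in the ordering consistent with $3\le i<j<k\le n$; since all three indices $j,k,l$ lie in $\{3,\ldots,n\}$, this is a matter of rearranging the three factors symmetrically, and gives the desired equality up to a common nonzero scalar.
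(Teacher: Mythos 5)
Your proposal is correct and follows exactly the route the paper intends: the paper proves Lemma~\ref{jedan} in detail and then simply states that Lemma~\ref{dva} follows by the analogous substitution of the $M_{2j}$ coordinate-change formulas into the main-stratum equations, which is precisely the computation you carry out, including the use of \eqref{orbit} to rewrite $z_jw_m-z_mw_j$ and of \eqref{relat} to pass to the second form. Your closing remark about index ordering in \eqref{relat} is a fair observation, but the paper itself glosses over this (it implicitly treats $c_{jk}$ symmetrically in $j,k$), so no further argument is needed beyond what you give.
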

3) $3\leq i<j\leq n$, we are  in the chart $M_{ij}$ and 
\[
z_{1}^{'} = \frac{w_{j}}{z_{i}w_{j}-z_{j}w_{i}}, \;\; z_{2}^{'} = -\frac{z_{j}}{z_{i}w_{j}-z_{j}w_{i}},
\;\; z_{k}^{'} = \frac{z_{k}w_{j}-z_{j}w_{k}}{z_{i}w_{j}-z_{j}w_{i}}, \; k\geq 3.
\]
\[
w_{1}^{'} = \frac{w_{i}}{z_{j}w_{i}-z_{i}w_{j}}, \;\; w_{2}^{'} = -\frac{z_{i}}{z_{j}w_{i}-z_{i}w_{j}},\;\;  w_{k}^{'} = \frac{z_{k}w_{i}-z_{i}w_{k}}{z_{j}w_{i}-z_{i}w_{j}} \; k\geq 3.
\]
For the coordinates of the space of parameters $F_n$ we obtain:
\begin{lem}\label{tri}
The coordinates  $(c_{kl}: c_{kl}^{'})$ and $(d_{kl}:d_{kl}^{'})$  of the space of parameters $F_n$ in the charts $M_{12}$ and $M_{ij}$, $3\leq i<j\leq n$ are related by
\[
(d_{12}:d_{12}^{'}) = (c_{ij} : c_{ij}^{'}),
\]
\[ (d_{1l}: d_{1l}^{'}) = (c_{jl}^{'}(c_{il}-c_{il}^{'}): c_{il}^{'}(c_{jl}-c_{jl}^{'})), \;\; 3\leq l\leq n, \; l\neq i,j,
\]
\[
(d_{2l}: d_{2l}^{'}) = (c_{jl}(c_{il}-c_{il}^{'}) : c_{il}(c_{jl}-c_{jl}^{'})), \;\;   3\leq l\leq n, \; l\neq i,j
\]
\[
(d_{kl}: d_{kl}^{'}) = ((c_{jk}^{'}-c_{jk})(c_{il}-c_{il}^{'})c_{ik}^{'}c_{jl}^{'}: (c_{ik}-c_{ik}^{'})(c_{jl}-c_{jl}^{'})c_{jk}^{'}c_{il}^{'}) 
\]
\[= 
((c_{jk}-c_{jk}^{'})(c_{il}-c_{il}^{'})c_{ik}c_{jl}^{'}c_{kl}: (c_{ik}-c_{ik}^{'})(c_{jl}-c_{jl}^{'})c_{jk}^{'}c_{il}c_{kl}^{'}), \;\;  3\leq k<l\leq n, \; k,l\neq  i,j.
\]
\end{lem}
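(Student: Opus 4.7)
My plan is to follow the exact template used in the proofs of \lemref{jedan} and \lemref{dva}: write the main-stratum equations in the chart $M_{ij}$ as $d_{kl}^{'}z_k^{'}w_l^{'} = d_{kl}z_l^{'}w_k^{'}$ for the relevant index pairs $(k,l)\subset\{1,\ldots,n\}\setminus\{i,j\}$, substitute the six coordinate-change formulas listed just before the lemma for $z_1^{'},z_2^{'},z_p^{'},w_1^{'},w_2^{'},w_p^{'}$ ($p\geq 3$), and then convert the resulting identities into relations between the parameters $(c_{pq}:c_{pq}^{'})$ by means of the main-stratum equations~\eqref{orbit} valid in $M_{12}$. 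In every case the common denominator $(z_iw_j-z_jw_i)^2$ produced by the transition formulas cancels from the two sides immediately, so the only real work is in simplifying the numerators.

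For $(d_{12}:d_{12}^{'})$, direct substitution of the four formulas for $z_1^{'},z_2^{'},w_1^{'},w_2^{'}$ yields $d_{12}^{'}z_iw_j = d_{12}z_jw_i$, and the relation $c_{ij}^{'}z_iw_j = c_{ij}z_jw_i$ from~\eqref{orbit} immediately gives $(d_{12}:d_{12}^{'}) = (c_{ij}:c_{ij}^{'})$. For $(d_{1l}:d_{1l}^{'})$ and $(d_{2l}:d_{2l}^{'})$ with $l\neq i,j$, the same substitution produces, after the denominator cancellation, an identity of the shape $d_{*l}^{'}\cdot A\cdot(z_lw_i-z_iw_l)=d_{*l}\cdot B\cdot(z_lw_j-z_jw_l)$, where $A,B\in\{w_i,w_j,z_i,z_j\}$ up to sign depending on whether $*=1$ or $*=2$. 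I would then rewrite each of the two minor-differences via~\eqref{orbit} as $z_lw_i-z_iw_l = z_lw_i(c_{il}-c_{il}^{'})/c_{il}$ and $z_lw_j-z_jw_l = z_lw_j(c_{jl}-c_{jl}^{'})/c_{jl}$ (or with the primed versions in the denominator), and cancelling the common $z,w$ monomials yields the two claimed ratios.

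For the general case $3\leq k<l\leq n$ with $k,l\neq i,j$, the substitution gives an equation whose each side is proportional to a product of two minor-differences $z_pw_q-z_qw_p$ for pairs $(p,q)\in\{(i,k),(j,k),(i,l),(j,l)\}$. Applying the same reduction using~\eqref{orbit} to each of the four differences removes the $z,w$ variables entirely, producing the first stated form of $(d_{kl}:d_{kl}^{'})$. The second, more symmetric form is then derived from the cubic relations~\eqref{relat} applied to the triples $(i,k,l)$ and $(j,k,l)$, which exchange the ratios $c_{ik}^{'}/c_{il}^{'}$ and $c_{jl}^{'}/c_{jk}^{'}$ for expressions carrying the additional factor $c_{kl}/c_{kl}^{'}$.

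The plan involves no conceptual novelty beyond what was already used in \lemref{jedan} and \lemref{dva}, and the main obstacle is purely computational: in the four-factor case one must track signs carefully, since several of the $z_k^{'},w_k^{'}$ carry the denominator $z_jw_i-z_iw_j$ rather than $z_iw_j-z_jw_i$. Because the output is a projective ratio in $\C P^1$, overall common signs are immaterial, which is what makes the final expression robust against the choice of sign convention used throughout the reduction.
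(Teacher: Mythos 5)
Your proposal is correct and follows exactly the template the paper sets up in the proofs of Lemmata~\ref{jedan} and~\ref{dva}; the paper omits the proof of Lemma~\ref{tri} precisely because it is the same mechanical substitution, which you reproduce faithfully (including the observation that the common factor $(z_iw_j-z_jw_i)^2$ cancels, the reduction of each minor-difference via~\eqref{orbit}, and the derivation of the second form of $(d_{kl}:d_{kl}^{'})$ from the cubic relations~\eqref{relat}). One small caution: your sample reduction $z_lw_i-z_iw_l = z_lw_i(c_{il}-c_{il}^{'})/c_{il}$ should read $z_lw_i(c_{il}^{'}-c_{il})/c_{il}^{'}$ (the unprimed coefficient goes with the cross term $z_iw_l$, not with $z_lw_i$); the sign is projectively immaterial as you note, but the primed/unprimed denominator is not, and using the wrong one would produce $c_{jl}, c_{il}$ in place of $c_{jl}^{'}, c_{il}^{'}$ in the $(d_{1l}:d_{1l}^{'})$ ratio --- you do flag this ambiguity, so I read it as a slip rather than a gap.
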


\begin{rem}\label{all}
In this way we obtain the family  $\{f_{12, ij}\}$ $i=1, j\geq 3$ or  $2\leq i<j\leq n$ of homeomorphisms of the space $F_{n}$.   Note that  any homeomorphism  $f_{kl, pq}$ of $F_{n}$  induced by the transition map between the charts $M_{kl}$ and $M_{pq}$ can be represented as $f_{kl, pq} = f^{-1}_{12, kl} \circ f_{12, pq}$. 
\end{rem}

\section{Universal space of parameters  $\mathcal{F}_{n}$ and wonderful compactification}

We want to  find the compactification $\mathcal{F}_{n}$  for $F_{n}$ such that  the  homeomorphisms $\{f_{12, ij}\}$   of $F_{n}$ induced by the transition maps  between the chart $M_{12}$ and a  chart $M_{ij}$, extend to the homeomorphisms of $\mathcal{F}_{n}$.
 According to Remark~\ref{all} it is enough to consider just the  family $\{f_{12, ij}\}$, since then
 any homeomorphism  $\{f_{kl, pq}\}$  of $F_n$ induced by the transition map  between any two charts $M_{kl}$ and $M_{pq}$ can be  canonically extended to the homeomorphism of $\mathcal{F}_{n}$.

\subsection{The compactification $\bar{F}_{n}$  of $F_n$ in $(\C P^{1})^{N}$}

We start with  $\bar{F}_{n}\subset (\C P^{1})^{N}$, $N={n-2\choose 2}$ given by
\[
\bar{F}_{n} = \{ ((c_{ij}:c_{ij}^{'})),  \; 3\leq i<j\leq n, \; c_{ik}c_{il}^{'}c_{kl}=c_{ik}^{'}c_{il}c_{kl}^{'}, \; 3\leq i<k<l\leq n\}, 
\]
 which is, by Proposition~\ref{smooth}, a smooth algebraic variety obtained as a compactification of $F_{n}$ in $(\C P^{1})^{N}$.

The compactification $\bar{F}_{n}$ is not the one we are looking for as it is obvious that the  homeomorphisms of $F_n$ defined in
previous lemmata for $n>4$  can not be  continuously  extended to $\bar{F}_n$. Note that for $n=4$  these homeomorphisms  extend to the homeomorphisms of $\bar{F}_{4} = \C P^{1}$.

In more detail, the boundary of $F_n$ in $\bar{F}_{n}$ is 
$\bar{F}_{n}\setminus F_{n}$ and it consists of the points  $(c_{ij}:c_{ij}^{'})\in \bar{F}_{n}$ such that   $c_{ij}=0$  or $c_{ij}^{'}=0$  or $c_{ij}=c_{ij}^{'}$ for some $ 3\leq i<j\leq n$. The homeomorphisms given by previous lemmata  do not continuously extend to 
the points from the boundary of $F_n$ which satisfy  $c_{jk} = c_{jk}^{'}$ and $c_{jl} = c_{jl}^{'}$ for some $3\leq j<k<l\leq n$. Moreover, even if they do continuously extend to some  subset from $\bar{F}_{n}\setminus F_{n}$ these extensions do not have to be homeomorphisms. For example, the subvariety in $\bar{F}_{n}\setminus F_{n}$ given by $c_{34}^{'} = c_{35}^{'}=0$ maps by the continuous extension of the homeomorphism  $f_{12. 13}$.  which is defined  by  Lemma~\ref{jedan} for   $j=3$,  to the subvariety given by $d_{24} =d_{24}^{'}$, $d_{25}=d_{25}^{'}$ and  $d_{45}=d_{45}^{'}$.  In particular for $n=5$ this means that the subvariety $(1:0), (1:0), (c_{45}:c_{45}^{'})$ maps to the point $((1:1), (1:1), (1:1))$, see also~\cite{BT-2}. Thus, this extension  can not  be a homeomorphism.  

To overcome these problems the idea is to blow up the smooth, compact variety $\bar{F}_{n}$ along  the singular subvarieties for $\{f_{12, ij}\}$ which consist of all  singular points  for these homeomorphisms in $\bar{F}_{n}$.   In order to  do that we use the technique from algebraic geometry  known as the wonderful compactification of an arrangement of subvarieties.

\subsection{Basic facts on wonderful compactification} 
A  wonderful compactification is a kind of compactification of a variety aimed to resolve singularities of a variety which appear in some context. The smooth   compactification $\tilde{X}$ of a  complex manifold  $M$ is wonderful in the sense that $D=\tilde{X}\setminus M$ is a divisor with normal crossings in $\tilde{X}$ whose irreducible components are smooth and any number of connected components of $D$ intersect transversally.  There are several compactifications in the literature which provide examples of wonderful compactification. We first mention  the compactification of the symmetric spaces given by De Concini and Procesi~\cite{CP1} in which $M$ is a symmetric space $G/H$ of an adjoint semisimple Lie group and $\tilde{X}$ is a smooth, compact variety with an $G$-action such that $\tilde{X}$ has an open orbit isomorphic to $G/H$ with finitely many $G$-orbits,  the orbit closures are all smooth  and any number of orbit closures intersect transversally. The other example is  the compactification of configuration spaces given by Fulton and MacPherson~\cite{FM} in which $M$ is an open subset of the Cartesian product $X^n$ of a given nonsingular variety $X$, that is  $M$ is   defined as the complement of all diagonals, while $\tilde{X}$ is defined by a sequence of blowups of $X^n$  along the nonsingular subvarieties corresponding to all diagonals. An example of wonderful compactification is  as well the compactification of arrangements of complements of linear subspaces given also by De Concini and Procesi~\cite{CP} in which $M$ is a finite-dimensional vector space and $\tilde{X}$ is obtained by replacing  any given family of its subspaces by a divisor with normal crossings. In  the recent  paper of   Li~\cite{LL}   the wonderful  compactification of arrangement of subvarieties is described and in this case  $M$ is a nonsingular variety and $\tilde{X}$ is obtained  by replacing   any  given arrangement of subvarieties by a divisor with normal crossings.


It has been proved that any  of   these  compactifications    can be  constructed by  a sequences of blow ups along appropriate subvarieties and  their transforms.     

We follow here the paper of Li~\cite{LL} to recall  the basic facts on wonderful compactification for the setting we are going to use.

\begin{defn}

Let $Y$ be a nonsingular variety over an algebraically closed field of arbitrary characteristic. Let $\mathcal{G}$ be a nonempty building set  and let $Y^{\circ} = Y\setminus \bigcup _{G\in \mathcal{G}}G$. The closure of the image of the naturally closed embedding
\[
Y^{0}\hookrightarrow \prod\limits_{G\in \mathcal{G}}Bl_{G}Y
\]
is called the wonderful compactification of $\mathcal{G}$ and it is denoted by $Y_{\mathcal{G}}$.
\end{defn}

We formulate two crucial theorems from~\cite{LL},  the first one  states  that the wonderful compactification $Y_{\mathcal{G}}$ is a nonsingular variety and the second one describes $Y_{\mathcal{G}}$ as the  series of blow-ups determined by the subvarieties from $\mathcal{G}$.  

\begin{thm}\label{t1}
Let $Y$ be a nonsingular variety and let $\mathcal{G}$ be a nonempty building set of subvarieties of $Y$. Then the wonderful compactification $Y_{\mathcal{G}}$ is a nonsingular variety. Moreover, for any $G\in \mathcal{G}$ there is a nonsingular divisor $D_{G}\subset Y_{\mathcal{G}}$ such that
\begin{enumerate}
\item the union of these divisors is $Y_{\mathcal{G}}\setminus Y^{0}$;
\item any set of these divisors meet transversally. An intersection of divisors $D_{T_1}\cap \cdots \cap D_{T_r}$ is nonempty  exactly when $\{T_1, \ldots, T_r\}$ form a $\mathcal{G}$-nest.
\end{enumerate}
\end{thm}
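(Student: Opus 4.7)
The plan is to realize $Y_{\mathcal{G}}$ concretely as an iterated sequence of blow-ups along nonsingular centers, and then read off both nonsingularity and the behaviour of the exceptional divisors from that explicit construction. First, I would enumerate $\mathcal{G} = \{G_1, \ldots, G_N\}$ so that the indexing refines the inclusion partial order, i.e.\ $G_i \subsetneq G_j$ implies $i < j$; such a linear refinement exists because $\mathcal{G}$ is finite, and this will ensure that when we come to blow up $G_k$, all elements strictly contained in it have already been processed.

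Next, define inductively $Y^{(0)} = Y$ and $Y^{(k)} = \mathrm{Bl}_{\widetilde{G}_k} Y^{(k-1)}$, where $\widetilde{G}_k \subset Y^{(k-1)}$ denotes the dominant transform of $G_k$ under the preceding blow-ups. The key claim is that at each stage $\widetilde{G}_k$ is nonsingular and meets all previously created exceptional divisors transversally. This is exactly where the building-set axioms enter: they guarantee that whenever a collection $H_1, \ldots, H_r \in \mathcal{G}$ with $H_i \subsetneq G_k$ has been processed earlier, the $\mathcal{G}$-factor of the intersection $H_1 \cap \cdots \cap H_r \cap G_k$ is the minimal element of $\mathcal{G}$ containing it, so local normal-crossing models propagate inductively. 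Granting this, since the blow-up of a nonsingular subvariety of a nonsingular variety remains nonsingular, $Y^{(N)}$ is nonsingular.

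I would then identify $Y^{(N)}$ with $Y_{\mathcal{G}}$ by checking the universal property of the latter: the embedding $Y^{\circ} \hookrightarrow \prod_{G \in \mathcal{G}} \mathrm{Bl}_G Y$ factors through $Y^{(N)}$, and the image closes up to all of $Y^{(N)}$, via the universal property of blow-ups applied iteratively. Defining $D_{G_k}$ as the dominant transform in $Y^{(N)}$ of the $k$-th exceptional divisor, each $D_{G_k}$ is nonsingular by construction and their union equals $Y^{(N)} \setminus Y^{\circ}$, proving (1). For (2), transversality of a collection $D_{T_1}, \ldots, D_{T_r}$ and the nest characterization of the nonemptiness of their intersection both reduce to a local computation at a point $p$: the local equations of the $D_{T_i}$ extend to a regular system of parameters exactly when $\{T_1, \ldots, T_r\}$ forms a $\mathcal{G}$-nest, a condition that encodes which dominant transforms still meet after the blow-up sequence.

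The main obstacle is the inductive claim embedded in the second step, namely that each dominant transform $\widetilde{G}_k$ remains nonsingular and meets the prior exceptional divisors transversally; this is the combinatorial heart of the argument and rests precisely on the building-set axioms. Closely related, the local coordinate bookkeeping needed to match transverse intersections of the $D_{T_i}$ with the combinatorial notion of a $\mathcal{G}$-nest requires a careful induction keeping track of coordinates introduced by each blow-up, and is the most technical piece. Once these ingredients are established, items (1) and (2) of the theorem follow from standard properties of iterated blow-ups along smooth centers in smooth ambient varieties.
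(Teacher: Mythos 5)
The paper does not give a proof of this theorem: it is quoted verbatim from Li's paper \cite{LL} as background material, and the only commentary the paper offers is a short sketch (after Theorem~\ref{t2}) of Li's iterated-blow-up strategy. Your proposal reconstructs essentially that strategy --- order $\mathcal{G}$ compatibly with inclusion, blow up the dominant transforms one at a time, and read off nonsingularity and divisor transversality from the blow-up sequence --- so as an outline it is consistent both with Li's argument and with what the paper indicates.

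However, as a proof it has the gap you yourself flag, and it is not a small one. The inductive claim that each dominant transform $\widetilde{G}_k$ is nonsingular and meets the previously created exceptional divisors transversally is precisely the technical heart of \cite{LL}: Li proves a separate lemma (which this paper actually restates in the discussion after Theorem~\ref{t2}) that after blowing up a minimal element $F\in\mathcal{G}$, the transforms $\{\tilde G\}_{G\in\mathcal{G}}$ again form a building set for an explicitly described arrangement $\tilde{\mathcal{S}}$ in $Bl_F Y$. Your proposal asserts the conclusion of that lemma but does not prove it, and invoking ``the building-set axioms'' is not by itself enough --- one must check that cleanness of intersections and the minimal-factorization property persist under blow-up, including intersections with the new exceptional divisor. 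A second gap is the identification $Y^{(N)}\cong Y_{\mathcal{G}}$: you only argue that the map $Y^{\circ}\hookrightarrow\prod_G Bl_G Y$ factors through $Y^{(N)}$ with dense image, which gives a surjection $Y^{(N)}\twoheadrightarrow Y_{\mathcal{G}}$, but showing it is an isomorphism (rather than merely a resolution) again requires the building-set/nest bookkeeping and is the actual content of Theorem~\ref{t2}. So: correct strategy, same route as the cited source, but the two steps you call the ``main obstacle'' and the ``most technical piece'' are exactly the steps that constitute the proof, and they remain open in the proposal.
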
  

\begin{thm}\label{t2}
Let $Y$ be a nonsingular variety and let $\mathcal{G}$ be a nonempty building set of subvarieties of $Y$.
Let the building set  $\mathcal{G} = \{G_1, \ldots , G_{Q}\}$ is ordered such  that the sets of subvarieties $\{G_1, \ldots , G_i\}$  form a building set for any $1\leq i\leq Q$.  Then the iterated blow-ups give the smooth variety
\[
X_{\mathcal{G}}= Bl_{\tilde{G}_{Q}}\cdots Bl_{\tilde{G}_{2}}Bl_{G_1}Y,
\]
where $\tilde{G}_{i}$ is a nonsingular variety obtained as the iterated  dominant transform of $G_{i}$ in $Bl_{\tilde{G}_{i-1}}\cdots Bl_{\tilde{G}_{2}}Bl_{G_1}Y$, $2\leq i\leq Q$ and the smooth manifold $X_{\mathcal{G}}$ coincides with the wonderful compactification $Y_{\mathcal{G}}$. 
\end{thm}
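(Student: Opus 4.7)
\textbf{Proof plan for Theorem~\ref{t2}.} I would argue by induction on $Q$, the cardinality of the building set $\mathcal{G}$, proceeding simultaneously with the smoothness statements needed to make the iterated blow-up well-defined.

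For the base case $Q = 1$, the wonderful compactification $Y_{\{G_1\}}$ is by definition the closure of $Y^{\circ} = Y \setminus G_1$ inside the single blow-up $Bl_{G_1}Y$. Since $Y^{\circ}$ is open and dense in $Bl_{G_1}Y$ (the blow-up is an isomorphism away from $G_1$, and the exceptional divisor is in the closure of $Y^{\circ}$), we obtain $Y_{\{G_1\}} = Bl_{G_1}Y$, matching the statement.

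For the inductive step, assume the result for building sets of size $Q-1$. By the ordering hypothesis, $\mathcal{G}' = \{G_1,\ldots, G_{Q-1}\}$ is itself a building set, so by induction
\[
Y_{\mathcal{G}'} = Bl_{\tilde G_{Q-1}} \cdots Bl_{\tilde G_2} Bl_{G_1} Y,
\]
and moreover (by the smoothness part needed for the construction to make sense) the dominant transform $\tilde G_Q$ of $G_Q$ in $Y_{\mathcal{G}'}$ is a nonsingular subvariety; this smoothness follows from the building-set axioms, which control the intersection behaviour of $G_Q$ with the earlier $G_i$ via the $\mathcal{G}$-nest condition, ensuring that successive dominant transforms do not acquire singularities. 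The plan is then to produce mutually inverse birational morphisms between $Y_{\mathcal{G}}$ and $Bl_{\tilde G_Q} Y_{\mathcal{G}'}$. In one direction, the projection $\pi \colon Y_{\mathcal{G}} \to Y_{\mathcal{G}'}$ obtained by forgetting the $G_Q$-factor of the embedding $Y^{\circ} \hookrightarrow \prod_{G \in \mathcal{G}} Bl_G Y$ is birational and the extra projection to $Bl_{G_Q} Y$ shows that $\pi^{-1}(\tilde G_Q)$ is Cartier in $Y_{\mathcal{G}}$; the universal property of blow-ups then factors $\pi$ through $Bl_{\tilde G_Q} Y_{\mathcal{G}'}$. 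Conversely, one checks directly that the natural map $Bl_{\tilde G_Q} Y_{\mathcal{G}'} \to Y_{\mathcal{G}'} \times Bl_{G_Q} Y$ extends the inclusion of $Y^{\circ}$ into the product, hence its image lies in the closure $Y_{\mathcal{G}}$. A comparison on the dense open $Y^{\circ}$ forces the two morphisms to be mutually inverse.

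The main obstacle is the smoothness of the iterated dominant transforms $\tilde G_i$. This is the genuinely delicate point, because a priori blowing up $Bl_{G_1}Y$ along the strict or dominant transform of another subvariety can introduce singularities unless the intersection pattern is well controlled. The plan is to handle this by a local étale computation at a point where several $G_i$ meet: the building-set axioms guarantee that the local ideals of the $G_i$ can be chosen compatibly (essentially as coordinate subspaces in a suitable chart), and this combinatorial input, phrased via $\mathcal{G}$-nests, is exactly what ensures that each dominant transform is smooth and transverse to the exceptional divisors already created. Once this local picture is established, the induction closes, and the identification $X_{\mathcal{G}} = Y_{\mathcal{G}}$ follows.
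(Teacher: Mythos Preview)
The paper does not give its own proof of this theorem; it is quoted from Li~\cite{LL}, and the paper only offers a short commentary on Li's argument. That commentary singles out one structural lemma as the engine of the proof: if $F$ is a \emph{minimal} element of the building set $\mathcal{G}$ with induced arrangement $\mathcal{S}$, and $E$ is the exceptional divisor of $Bl_F Y$, then the collection $\tilde{\mathcal{S}} = \{\tilde S\}_{S\in\mathcal{S}} \cup \{\tilde S\cap E\}_{\emptyset\neq S\cap F\subsetneq S}$ is again an arrangement in $Bl_F Y$, and $\tilde{\mathcal{G}} = \{\tilde G\}_{G\in\mathcal{G}}$ is again a building set for it. With this in hand, one simply blows up along a minimal element and iterates.

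Your plan is correct and is essentially Li's argument, but packaged from the opposite end of the induction: you peel off the \emph{last} element $G_Q$ and compare $Y_{\mathcal{G}}$ with $Bl_{\tilde G_Q} Y_{\mathcal{G}'}$ via the universal property of blow-ups, whereas the paper's summary (following Li) peels off the \emph{first}, minimal element and carries the building-set structure forward. The substantive content is the same, and you correctly identify the smoothness of the successive dominant transforms $\tilde G_i$ as the real work. What the paper's commentary adds is precisely the clean formulation of that step: rather than an unspecified ``local \'etale computation controlled by $\mathcal{G}$-nests,'' the point is the concrete lemma above, which says the transformed family is again a building set (hence in particular its members are nonsingular and meet cleanly). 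Stating and invoking that lemma would replace the hand-waving in your final paragraph and make the induction close without further effort.
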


We explain shortly the notions which are used in  these theorems.

\begin{defn}
A simple  arrangement of subvarieties of a nonsingular variety $Y$ is a
finite set $\mathcal{S} = \{S_{i}\}$ of nonsingular closed subvarieties $S_i$, properly contained in $Y$,
that satisfy the following conditions:
\begin{enumerate}
\item  $S_i$  and $S_j$ intersect cleanly;
\item  $S_{i} \cap  S_{j}$  either is equal to some $S_k$ or it is empty.
\end{enumerate}
\end{defn}

Here two closed non-singular subvarieties $S_1$ and $S_2$ in $Y$ are said to intersect cleanly if their intersection is nonsingular and their tangent bundles satisfy $T(S_1\cap S_2) = T(S_1)|_{(S_1\cap S_2)} \cap T(S_2)|_{(S_1\cap S_2)}$.

\begin{defn}
Let $\mathcal{S}$ be an arrangement of subvarieties of $Y$. A subset $\mathcal{G} \subseteq \mathcal{S}$
is called a building set of $\mathcal{S}$ if, for all $S\in \mathcal{S}$, the minimal elements in $\{G\in \mathcal{G} :
 S\subseteq G\}$ intersect transversally and their intersection is $S$.
\end{defn}

Here the subvarieties $S_1, \ldots ,S_k$ intersect transversally if   $k=1$ or 
\[
\text{codim}\big (\bigcap\limits _{i=1}^{k} T(S_i), T(Y)\big ) =\sum\limits_{i=1}^{k}\text{codim}(S_i, Y).
\] 

A finite set $\mathcal{G}$ of nonsingular subvarieties of $Y$ is called a building set if the set of
all possible intersections of collections of subvarieties from $\mathcal{G}$ form an arrangement $\mathcal{S}$ and if $\mathcal{G}$ is a building set of $\mathcal{S}$. Then  $\mathcal{S}$ is called the arrangement
induced by $\mathcal{G}$. 

We point to the following observation which we will find useful.

\begin{lem}\label{building}
Let a finite set  of nonsingular  subvarieties $\mathcal{G}$ of nonsingular variety $Y$ satisfies the following:
\begin{itemize}
\item $\mathcal{G}$ contains all intersections of its elements;
\item  any two elements of $\mathcal{G}$ intersect cleanly.
\end{itemize}
Then  $\mathcal{G}$ is a building set.
\end{lem}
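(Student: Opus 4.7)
The plan is to unpack the definitions and verify them directly; under the two hypotheses on $\mathcal{G}$, the arrangement induced by $\mathcal{G}$ turns out to coincide with $\mathcal{G}$ itself, and the building set condition reduces to a triviality.

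First, I would identify the arrangement $\mathcal{S}$ induced by $\mathcal{G}$. By definition, $\mathcal{S}$ consists of all nonempty intersections of collections of subvarieties from $\mathcal{G}$. By the first hypothesis, every such intersection already lies in $\mathcal{G}$, so $\mathcal{S} = \mathcal{G}$ as sets. To confirm that this $\mathcal{S}$ is a simple arrangement in the sense of the definition, I would verify its two clauses on the set $\mathcal{G}$: cleanness of pairwise intersections is the second hypothesis, and the requirement that $S_i \cap S_j$ either equal some $S_k$ or be empty is exactly the closure-under-intersection hypothesis. (The convention that elements of $\mathcal{S}$ are proper in $Y$ is either built into the hypotheses or costs nothing to add by removing $Y$ itself if it occurs.)

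Second, I would check the building-set condition on $\mathcal{G}$ relative to $\mathcal{S}=\mathcal{G}$. For each $S\in\mathcal{S}$, one needs that the minimal elements of $\mathcal{M}(S):=\{G\in\mathcal{G}:S\subseteq G\}$ intersect transversally and have intersection equal to $S$. The key observation is that $S$ itself belongs to $\mathcal{G}$ (because $\mathcal{S}=\mathcal{G}$), hence to $\mathcal{M}(S)$; no element of $\mathcal{M}(S)$ can be properly contained in $S$, since every element of $\mathcal{M}(S)$ contains $S$. Therefore $S$ is the unique minimal element of $\mathcal{M}(S)$. The transversality requirement is then the $k=1$ case of the transversality definition, which is declared to hold automatically, and the intersection of the one-element family $\{S\}$ is of course $S$. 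This closes both verifications.

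There is no serious obstacle: the lemma is really a consequence of rewriting the hypotheses against the definitions. The only mildly delicate point is bookkeeping to make sure that the arrangement $\mathcal{S}$ induced by $\mathcal{G}$ is literally the same set as $\mathcal{G}$, and that one invokes the $k=1$ clause of the transversality definition rather than trying to prove a codimension formula. Once these two matches are stated cleanly, the conclusion that $\mathcal{G}$ is a building set is immediate.
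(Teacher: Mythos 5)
Your proof is correct and takes essentially the same route as the paper's: the paper likewise observes that $\mathcal{G}$ is a simple arrangement under the two hypotheses and that each $S\in\mathcal{G}$ is the unique minimal element of $\{G\in\mathcal{G}:S\subseteq G\}$, so the building-set condition holds trivially. Your version simply makes explicit the identification $\mathcal{S}=\mathcal{G}$ and the appeal to the $k=1$ clause of the transversality definition, which the paper leaves implicit.
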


\begin{proof}
In this case the  set $\mathcal{G}$ is  a simple arrangement whose  building set is   $\mathcal{G}$  since for any $S\in \mathcal{G}$ we have that $S$ is the smallest element of the set $\{ G\in \mathcal{G} | S\subseteq G\}$.
\end{proof} 

We recall   also the meaning of blow-ups in Theorem~\ref{t2}.

\begin{defn} Let $Z$  be a nonsingular subvariety of a nonsingular variety $Y$, then let  $Bl_{Z}Y$ be the blow-up of $Y$ along $Z$ and 
and let $\pi  : Bl_{Z}Y \to  Y$  be the canonical projection.  
For any irreducible subvariety $V$ of $Y$ the dominant transform $\tilde{V}$ is defined to be 
\begin{itemize}
\item the strict transform of $V$  if  $V\not\subseteq Z $, which  is the closure  of $\pi ^{-1}(V\setminus (V\cap Z))$ in $Bl_{Z}Y$,
\item   the scheme-theoretic inverse $\pi ^{-1}(V )$  if $V\subseteq Z$.
\end{itemize}
\end{defn}

The reason for introducing the notion of the dominant transform is to correct the fact that the strict transform of a subvariety contained in the center of blow-up is empty. In our applications  it will always be satisfied that  $V\not\subset Z$, so $\tilde{V}$ will always be a strict transform.

We also shortly comment on the proof of Theorem~\ref{t2}, see~\cite{LL}. The proof essentially relies on the result proved in the same paper:  Let  $Y$ be a nonsingular  and  $F$ is a minimal element in the building set $\mathcal{G} = \{G_1, \ldots, G_N\}$ with the induced arrangement $\mathcal{S}$  and $E$ is an exceptional divisor  in the blow up $Bl_{F}Y$.  Then the collection of subvarieties $\tilde{\mathcal{S}}$ in $Bl_{F}Y$  defined by
\[
\tilde{\mathcal{S}} = \{\tilde{S}\}_{S\in \mathcal{S}} \cup \{\tilde{S}\cap E\}_{\emptyset \neq S\cap F\varsubsetneq S}
\]
is an arrangement of subvarieties in $Bl_{F}Y$ and $\tilde{\mathcal{G}}= \{\tilde{G}\}_{G\in \mathcal{G}}$ is a building set in $\tilde{\mathcal{S}}$. 

Then the idea is to  order (partially) the set $\mathcal{G}$ according to the inclusion relation and to start  with the blow up of $Y$  along the minimal  element $F$,  and,  using just   mentioned result, to iteratively repeat   the procedure with the  dominant transform  
$\tilde{\mathcal{G}}$ of $\mathcal{G}$ in $Bl_{F}Y$.  

\subsection{The space $\mathcal{F}_{n}$ as the wonderful compactification on $\bar{F}_{n}$}\label{building}
Let  $\bar{F}_{n}\subset (\C P^{1})^{N}$  be as given in   Proposition~\ref{smooth} and let 
\begin{equation}\label{31}
\hat{F}_{I} =\bar{F}_{n}\cap \{ (c_{ik}:c_{ik}^{'})=(c_{il}:c_{il}^{'}) = (c_{kl}:c_{kl}^{'})=(1:1)\},
\end{equation}
for $I=\{i, k, l\}\in \{I\subset \{1, \ldots, n\},\; |I|=3\}$ and $n\geq 5$.

We take $Y=\bar{F}_{n}$ and take  the building set $\mathcal{G}_{n}$ to be
\begin{itemize}
\item $\mathcal{G}_{n}=\emptyset$ for $n=4$,
\item $ \mathcal{G}_{n} = \{G=\bigcap\limits_{I}\hat{F}_{I}\subset \bar{F}_{n}\}$, that is all possible nonempty intersection of $\hat{F}_{I}$'s.
\end{itemize}

An element $G\in \mathcal{G}_n$   of the form $G =  \hat{F}_{I_1}\cap \cdots \cap \hat{F}_{I_k}$ we denote by $\hat{F}_{I_1, \ldots, I_k}$.  

\begin{lem}
 $\mathcal{G}_n$ is a  building set.
\end{lem}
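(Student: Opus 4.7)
The plan is to apply the preceding lemma, so three conditions need to be checked: every element of $\mathcal{G}_n$ is a nonsingular subvariety of $\bar{F}_n$; $\mathcal{G}_n$ is closed under nonempty intersection; and any two elements intersect cleanly in $\bar{F}_n$. The second condition is immediate from the definition, since if $G = \hat{F}_{I_1, \ldots, I_k}$ and $G' = \hat{F}_{J_1, \ldots, J_l}$ then $G \cap G' = \hat{F}_{I_1, \ldots, I_k, J_1, \ldots, J_l}$ is again an intersection of $\hat{F}_I$'s.

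The key step is the analysis of $\bar{F}_n$ in the big affine chart $U = (\C^\ast)^N \subset (\C P^1)^N$. Using the affine coordinates $t_{ij} = c_{ij}^{'}/c_{ij}$, the defining equations of $\bar{F}_n$ reduce to the monomial cocycle relations $t_{ij}t_{jk} = t_{ik}$, and in logarithmic coordinates $u_{ij} = \log t_{ij}$ they become the linear conditions $u_{ij} + u_{jk} = u_{ik}$. Hence $\bar{F}_n \cap U$ is the linear subspace of $\C^N$ freely parametrized by $u_{3j}$ for $4 \le j \le n$, and in particular $\bar{F}_n \cap U \cong (\C^\ast)^{n-3}$. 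In this chart, $\hat{F}_I \cap U$ for $I = \{i,k,l\}$ is the linear subspace cut out by $u_{ik} = u_{il} = u_{kl} = 0$, which amounts to $u_{3k} = u_{3l} = 0$ when $3 \in I$ and to the diagonal condition $u_{3i} = u_{3k} = u_{3l}$ otherwise. Every $G \in \mathcal{G}_n$ therefore restricts in $U$ to a linear subspace of $\C^{n-3}$, and any two linear subspaces of an affine space intersect cleanly.

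For a point $p \in G \cap G'$ lying outside $U$, I would work in the affine chart of $(\C P^1)^N$ at $p$ that uses $t_{ab}$ on the factors with $c_{ab} \neq 0$ and $s_{ab} = c_{ab}/c_{ab}^{'}$ (or its reciprocal) on the remaining factors. The cubic relations of $\bar{F}_n$ there degenerate to monomial equations of the form $t_{ab}s_{bk} = s_{ak}$ or $s_{ak}t_{kl} = s_{al}$, which can be solved to express certain coordinates as monomials in a chosen free subset, exhibiting $\bar{F}_n$ locally as a smooth coordinate affine space (consistent with Proposition~\ref{smooth}). Crucially, if $\hat{F}_I$ passes through $p$ then $(c_{ab}:c_{ab}^{'}) = (1:1)$ for $\{a,b\} \subset I$, so those pairs automatically lie in the $t$-part of the chart and $\hat{F}_I$ is cut out there by the coordinate equations $t_{ab} = 1$. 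The members $G$ and $G'$ of $\mathcal{G}_n$ near $p$ are thus coordinate subvarieties in a local affine parametrization of $\bar{F}_n$ and meet cleanly at $p$.

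The main obstacle is the systematic bookkeeping of charts at boundary points and the verification that the degenerate monomial relations produce a smooth local parametrization in each case, with the $\hat{F}_I$'s still appearing as coordinate subvarieties. Once this is settled, pairwise cleanness yields by induction that every nonempty intersection $\hat{F}_{I_1, \ldots, I_k}$ is nonsingular, confirming the remaining hypothesis of the preceding lemma and establishing that $\mathcal{G}_n$ is a building set.
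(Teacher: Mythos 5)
Your approach matches the paper's: apply the preceding lemma (closure under intersection plus pairwise clean intersection implies building set), note that closure under intersection is immediate, and then argue clean intersection from the explicit description of the $\hat F_I$. Where you go further than the paper is the log-linearization on the big torus chart $(\C^\ast)^N$: converting the cubic relations to $t_{ij}t_{jk}=t_{ik}$ and then to $u_{ij}+u_{jk}=u_{ik}$ shows at a glance that $\bar F_n\cap(\C^\ast)^N\cong(\C^\ast)^{n-3}$ and that each $\hat F_I$ restricts to a linear subspace in $u$-coordinates, so cleanness there is automatic. The paper's proof simply asserts clean intersection ``from the description of the subvarieties'' with no calculation, so your interior-chart argument is a genuine and useful filling-in of detail.

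That said, you correctly flag the unfinished part, and it is not a formality. For disjoint (or only partly overlapping) triples $I,J$ with $n$ large, points of $\hat F_I\cap\hat F_J$ really do escape the big torus: the relations $c_{ij}'c_{ik}c_{jk}'=c_{ij}c_{ik}'c_{jk}$ do not force the coordinates outside $I\cup J$ to be $(1:1)$, and one can build solutions with some $(c_{kl}:c_{kl}')$ equal to $(0:1)$ or $(1:0)$. So the boundary-chart analysis --- choosing an affine chart at $p$, checking that the degenerated monomial relations give a smooth local coordinate system on $\bar F_n$, and seeing that the $\hat F_I$ through $p$ are coordinate subvarieties in it --- is actually needed, and your sketch of it would have to be carried out to close the argument. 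In fairness, the paper's own proof leaves this same verification tacit; your write-up is honest about the gap and, in the interior chart, more explicit than the original.
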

\begin{proof}
Any  intersection  of  elements from $\mathcal{G}_n$ belongs to  $\mathcal{G}_n$.  The set $\mathcal{G}_n$ is a simple arrangement since obviously intersection of two elements from $\mathcal{G}_n$ is either empty either it belongs to $\mathcal{G}_n$ and any two elements intersect cleanly. We see this   from  the description of the subvarieties ${F}_{I_1, \ldots, I_k}\subset (\C P^{1})^{N}$ given by~\eqref{relat}  and  from the observation that  for $S_1=\hat{F}_{I_{i_1}, \ldots I_{i_k}}, S_2=\hat{F}_{I_{j_1},\ldots I_{j_l}}\in \mathcal{S}$  it holds  
\[
S_1\cap S_{2}= \hat{F}_{I_{i_1}, \ldots ,I_{i_k}, I_{j_1}, \ldots , I_{j_l}},
\]

Lemma~\ref{building}  then implies that $\mathcal{G}_n$ is a building set.
\end{proof}

Next we prove that the building set $\mathcal{G}_n$ satisfies the condition of Theorem~\ref{t2}.

\begin{lem}
The defined building set $\mathcal{G}_n$  can be ordered as  $\mathcal{G}_n=\{G_1, \ldots , G_{Q}\}$ such that the sets of subvarieties $\{G_1, \ldots , G_i\}$  form a building sets for any $1\leq i\leq Q$.
\end{lem}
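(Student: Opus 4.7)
The plan is to enumerate $\mathcal{G}_n$ by any total order that refines the partial order given by reverse inclusion: that is, choose an ordering $\mathcal{G}_n=\{G_1,\ldots,G_Q\}$ such that whenever $G_j\subsetneq G_k$ we have $j<k$. Such a refinement exists by the standard topological sort argument on any finite poset. With this ordering in hand, the remaining work is to check that each initial segment $\{G_1,\ldots,G_i\}$ meets the definition of a building set, and the crucial observation will be that passing to an initial segment preserves closure under intersection.

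First I would verify closure: given $G_j,G_k$ with $j,k\leq i$, their intersection is either empty or, by the description $\hat{F}_{I_1,\ldots,I_p}\cap \hat{F}_{J_1,\ldots,J_q}=\hat{F}_{I_1,\ldots,I_p,J_1,\ldots,J_q}$ used in the preceding lemma, some element $G_\ell\in\mathcal{G}_n$. In the nonempty case $G_\ell\subseteq G_j$ and $G_\ell\subseteq G_k$, so by the chosen ordering $\ell\leq\min(j,k)\leq i$ and hence $G_\ell\in\{G_1,\ldots,G_i\}$. Consequently the arrangement induced by $\{G_1,\ldots,G_i\}$ coincides with $\{G_1,\ldots,G_i\}$ itself. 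That this is a simple arrangement is inherited from the corresponding properties of $\mathcal{G}_n$ established in the previous lemma: its elements are nonsingular closed subvarieties of $\bar F_n$ which intersect pairwise cleanly, and their pairwise intersections are either in the set or empty.

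Finally, once the induced arrangement equals the set itself, the building-set condition becomes automatic. For any $S=G_j\in\{G_1,\ldots,G_i\}$ the collection $\{G\in\{G_1,\ldots,G_i\}:S\subseteq G\}$ has $S$ as its unique minimal element, so the transversality requirement for a single minimal element is vacuous and the intersection is $S$ itself. Hence $\{G_1,\ldots,G_i\}$ is a building set for every $1\leq i\leq Q$, which is exactly what the statement requires. I do not expect any serious obstacle; the only substantive point is the compatibility of the ordering with intersections, which is precisely what forces the choice of a linear extension of reverse inclusion rather than an arbitrary enumeration.
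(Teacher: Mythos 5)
Your proof is correct and takes essentially the same approach as the paper: the paper orders $\mathcal{G}_n$ by decreasing value of the invariant $\mathfrak{o}(G)$ (the number of coordinates forced to $(1:1)$), which is one particular linear extension of reverse inclusion, and then invokes clean intersection together with closure under intersection exactly as you do; your choice of an arbitrary linear extension is a mild generalization of the same idea.
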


\begin{proof}
We assign to an element $G=\hat{F}_{I_1, \ldots, I_k}\in \mathcal{G}_n$ the number $\mathfrak{o}(G)$ which is equal to the number of  coordinates of the points  $\bar{F}\subset (\C P^1)^{N}$ which are  determined by the set $I_1\cup \cdots \cup I_k$. In other words $\mathfrak{o}(G)$ is the number of coordinates of the form $(1:1)$  common for  all points from $G$. For example, if $k=2$ and $I_1=345$, $I_2=346$ then $\mathfrak{o}(G) =6$, then for $I_1=345$, $I_2=367$ we have that $\mathfrak{o}(G) = 10$, while in the case $I_1\cap I_2=\emptyset$ in general  we have $\mathfrak{o}(G) = 6$.  We define an equivalence relation on $\mathcal{G}$ by:   $G_1, G_2$ are in relation if and only if $\mathfrak{o}(G_1)=\mathfrak{o}(G_2)$. Denote by $\tilde{\mathcal{G}}_{1}, \ldots, \tilde{\mathcal{G}}_{P}$ the corresponding equivalence classes.  We assume that we  order these  equivalence classes by an order  which is oppositely compatible with the corresponding numbers $\mathfrak{o}(\tilde{\mathcal{G}}_{i})$ that is $i<j$ if and only if $\mathfrak{0}(\tilde{\mathcal{G}}_{i})>\mathfrak{o}(\tilde{\mathcal{G}}_{j})$. It implies that $\tilde{\mathcal{G}}_{1}$ contains only the point $S=(1:1)^{N}$, while $\mathcal{G}_{P}$ is consisted  consists of all elements $\hat{F}_{I}$.

We  further order the elements  of $\mathcal{G}_n$ as follows: $G_1=(1:1)^{N}$, then we put the elements from $\tilde{\mathcal{G}}_{1}$ in an arbitrary order, after that we put the elements from $\tilde{\mathcal{G}}_{2}$ in an arbitrary order and so on, at the end we put the elements of $\tilde{\mathcal{G}}_{P}$,  that is 
$\hat{F}_{I}$ in an arbitrary oder. We denote this order by  $\mathcal{G}_n=\{G_1, \ldots ,G_{Q}\}$.  Since the elements of $\mathcal{G}$ intersect cleanly it follows that  the set   $\{G_1, \ldots, G_{i}\}$ is a building set for any $1\leq i\leq Q$.
\end{proof}

By $\mathcal{F}_{n}$  we denote the smooth, compact manifold $Y_{\mathcal{G}}$  which is the  wonderful compactification with the building set $\mathcal{G} = \mathcal{G}_{n}$ and $Y= \bar{F}_{n}$.  

\begin{rem}
Note that for $n=5$ the building set $\mathcal{G}_{5}$ consists of one point $P=((1:1), (1:1), (1:1))$ and, therefore, 
$\mathcal{F}_{5}= Bl_{P}\bar{F}_{5}$, compare to~\cite{BT-2}. For $n=6$, the description  of  $\mathcal{F}_{6}$ is no more trivial and allows us to demonstrate the general approach.
\end{rem}

We have the situation that we are given a smooth manifolds  $F_{n}\subset (\C P^{1})^{N}$, which is an open subset in $\bar{F}_{n}\subset (\C P^{1})^{N}$ and the group of automorphisms $\mathcal{A}= \{f_{ij, kl}\}$ for  $F_{n}$ which are induced by the transition maps between the charts $M_{ij}$ and $M_{kl}$ for $G_{n,2}$.   The manifold $\mathcal{F}_{n}$ which is a compactification for $F_{n}$ satisfies the desired property stated in the introduction :

\begin{thm}\label{wcom}
The homeomorphisms of $F_{n}$ given by the set    $\mathcal{A}$  extend to the homeomorphisms of $\mathcal{F}_{n}$.
\end{thm}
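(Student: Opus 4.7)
The plan is to reduce the extension problem to the universal property of the iterated blow-up description of $\mathcal{F}_n$ given in Theorem~\ref{t2}. For each $f_{12,ij}\in \mathcal{A}$ I would: (a) extend $f_{12,ij}$ to a rational map $\bar f_{12,ij}\colon \bar F_n \dashrightarrow \bar F_n$, (b) show that its indeterminacy locus is set-theoretically contained in $\bigcup_{G\in\mathcal{G}_n}G$, and (c) invoke the blow-up closure lemma along the tower of blow-ups $Bl_{\tilde G_Q}\cdots Bl_{G_1}\bar F_n = \mathcal F_n$ to lift $\bar f_{12,ij}$ to a morphism $\tilde f_{12,ij}\colon \mathcal{F}_n \to \mathcal{F}_n$. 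A symmetric argument for the inverse, together with density of $F_n$, then upgrades ``morphism'' to ``homeomorphism''.

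For Step (a), the explicit formulas of Lemmas~\ref{jedan},~\ref{dva},~\ref{tri} exhibit $f_{12,ij}$ coordinate-by-coordinate as a ratio of bihomogeneous polynomials in the $(c_{pq}:c_{pq}')$, so they define a rational map on all of $\bar F_n$; a given coordinate $(d_{kl}:d_{kl}')$ fails to be defined exactly where its numerator and denominator vanish simultaneously. For Step (b), one carries out a short case analysis using the cubic relations~\eqref{relat} that cut out $\bar F_n$. For instance, in Lemma~\ref{jedan} the formula $(d_{kl}:d_{kl}')=(c_{jl}(c_{jk}-c_{jk}'):c_{jk}(c_{jl}-c_{jl}'))$ is indeterminate only when $c_{jk}=c_{jk}'$ and $c_{jl}=c_{jl}'$ (the remaining subcases being ruled out by $(c_{pq}:c_{pq}')\in \C P^1$), and on that locus the relation $c_{jk}c_{jl}'c_{kl}=c_{jk}'c_{jl}c_{kl}'$ forces $(c_{kl}:c_{kl}')=(1:1)$ as well. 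Hence this indeterminacy locus sits inside $\hat F_{\{j,k,l\}}$. The analogous analysis for the formulas of Lemmas~\ref{dva} and~\ref{tri} reduces to the same pattern: any simultaneous vanishing of numerator and denominator forces a Pl\"ucker-triple of coordinates to equal $(1:1)$, i.e.\ membership in some $\hat F_I$, so the total indeterminacy of $\bar f_{12,ij}$ is contained in $\bigcup_I \hat F_I\subseteq \bigcup_{G\in\mathcal{G}_n}G$.

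For Step (c), I would apply the blow-up closure lemma inductively along the ordered blow-ups $\mathcal{F}_n = Bl_{\tilde G_Q}\cdots Bl_{G_1}\bar F_n$ of Theorem~\ref{t2}. At each stage, a rational map whose indeterminacy is supported in the center of the next blow-up lifts to a morphism on that blow-up, and the indeterminacy of the lift remains supported on the dominant transforms of the remaining building-set elements (this is exactly why the building-set $\mathcal{G}_n$ was chosen to be closed under intersection and to contain every $\hat F_I$). The inductive step needs that after blowing up $\tilde G_i$, the dominant transform of the indeterminacy is again a union of dominant transforms of elements of $\mathcal{G}_n$, which follows from the stability of the induced arrangement under blow-up stated in the proof-sketch of Theorem~\ref{t2}. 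Iterating along the full chain yields a morphism $\tilde f_{12,ij}\colon \mathcal{F}_n\to \mathcal{F}_n$ extending $f_{12,ij}$.

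Finally, Remark~\ref{all} identifies $f_{12,ij}^{-1}$ with $f_{ij,12}$, which is again given by formulas of the same bihomogeneous type and has indeterminacy locus of the same shape, so the preceding steps apply symmetrically to produce $\tilde f_{ij,12}\colon \mathcal{F}_n\to \mathcal{F}_n$. The composites $\tilde f_{ij,12}\circ \tilde f_{12,ij}$ and $\tilde f_{12,ij}\circ \tilde f_{ij,12}$ restrict to the identity on the dense open subset $F_n\subset \mathcal{F}_n$, hence equal the identity on all of $\mathcal{F}_n$ by continuity, so $\tilde f_{12,ij}$ is a homeomorphism. The main obstacle is Step (b): making the case analysis of simultaneous vanishing of numerator and denominator uniform across all three Lemmas and verifying systematically that the cubic Pl\"ucker relations force the indeterminacy points into the chosen building set $\mathcal{G}_n$ — everything else then follows formally from the wonderful-compactification machinery.
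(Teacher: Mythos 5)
Your high-level plan --- resolve $f_{12,ij}$ as a rational self-map of $\bar F_n$ along the building set, then use density and the inverse to upgrade the resulting morphism to a homeomorphism of $\mathcal{F}_n$ --- is in the right spirit, and Step~(b), namely that the indeterminacy locus of each $f_{12,ij}$ is contained in $\bigcup_I\hat F_I$, is correct and matches the case analysis carried out in the paper's proof. The genuine gap is in Step~(c), and it is exactly the point that the paper's own proof has to confront head-on and that your proposal never sees.

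Resolving the indeterminacy of $f_{12,ij}$ by blowing up the \emph{source} along the centers of $\mathcal{G}_n$ gives, at best, a morphism $\mathcal{F}_n\to \bar F_n$, not a morphism $\mathcal{F}_n\to \mathcal{F}_n$. To lift it through the blow-ups on the \emph{target} side you must verify that the pullback of the ideal sheaf of each center $\hat F_I$ is invertible (this is the universal property of blow-up), and then propagate this through the iterated tower; your proposal says nothing about what the preimages of the $\hat F_I$ actually are. This is precisely where a second family of subvarieties $\breve F_I\subset\bar F_n$, given by $(c_{jl}:c_{jl}')=(c_{jp}:c_{jp}')=(1:0)$, enters the picture: the continuous extension of $f_{12,1j}$ is \emph{defined} along $\breve F_I$ but is not a local homeomorphism there --- it contracts $\breve F_I$ onto $\hat F_I$ --- so the preimage of the center $\hat F_I$ under the partially resolved map contains the strict transform of $\breve F_I$. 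These $\breve F_I$ do not lie in the building set $\mathcal G_n$ and are entirely invisible to the wonderful-compactification machinery of Theorem~\ref{t2} as you invoke it. The paper's proof is forced to identify them explicitly: it introduces the families $\mathcal G(j)$ (intersections of the $\hat F_I$, where the extension fails to be continuous) and $\mathcal H(j)$ (intersections of the $\breve F_I$, where it fails to be a local homeomorphism), records that the extended $f_{12,1j}$ carries $\breve F_I$ onto $\hat F_I$, and then builds the homeomorphism $\tilde f_{12,1j}$ of $\mathcal F_n$ piecewise: natural extension on the complement of $\mathcal G(j)\cup\mathcal H(j)$, a matching of the transforms of $\mathcal H(j)$ with the exceptional divisors over $\mathcal G(j)$, and the inverse matching on those exceptional divisors. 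Your argument would need to supply the analogue of this --- either an explicit verification that the preimages of the centers are Cartier divisors at every stage of the tower, or an explicit gluing along the lines of the paper --- before your final density argument, which is sound in itself once both morphisms $\tilde f_{12,ij}$ and $\tilde f_{ij,12}$ are known to exist as maps $\mathcal F_n\to\mathcal F_n$, can be carried out.
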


\begin{proof}
Since it holds $f_{ij, kl} = f^{-1}_{12, ij}\circ f_{12, kl}$,  it is enough to prove the statement for the homeomorphisms $f_{12, ij}$. We demonstrate the proof for the  homeomorphisms $f_{12, 1j}$ given by Lemma~\ref{jedan}, the other cases go in an analogous way.
We discuss first  the homeomorphic extension  of the homeomorphisms $f_{12, ij}$ to the boundary $\bar{F}_{n}\setminus F_{n}$.
This  boundary  is given by the conditions  $(c_{kl}:c_{kl}^{'}) = (1:0)$ or $(0:1)$ or $(1:1)$ for some $3\leq k<l\leq n$. We analyze each of these cases using the equations   $c_{kl}c_{kp}^{'}c_{lp}=c_{kl}^{'}c_{kp}c_{lp}^{'}$ which define $\bar{F}_{n}$  and the expressions for $f_{12,1j}$ given by Lemma~\ref{jedan}.

\begin{itemize}
\item  We first assume that $k\neq j$.  
 For $c_{kl}^{'}=0$   we have that $c_{kp}^{'} = 0$ or $c_{lp}=0$,  which implies that   $d_{kl}^{'} = 0$ and, $d_{kp}^{'} = 0$ or $d_{lp}=0$.    For $c_{kl} = 0$ we have that $c_{kp}=0$ or $c_{lp}^{'}=0$,  which gives $d_{kl} =0$ and,  $d_{kp}=0$ or $d_{lp}^{'} =0$. For $c_{kl}=c_{kl}^{'}$ we have $(c_{lp}:c_{lp}^{'})=(c_{kp}:c_{kp}^{'})$,  which implies $d_{kl}=d_{kl}^{'}$ and $(d_{lp}:d_{lp}^{'})=(d_{kp}:d_{kp}^{'})$.  
\item We assume now that  $k=j$.  If  $c_{jl} = c_{jp}=0$ we have that $d_{2l}=d_{2p}=0$, while for $c_{jl}=c_{lp}^{'}=0$ we have that
$d_{2l}=0$ and $d_{lp}^{'}=0$. 
If $c_{jl}^{'}= c_{jp}^{'}=0$, then $(c_{lp}:c_{lp}^{'})$ can be an arbitrary element form $\C P^1$, while we have that $(d_{2l}:d_{2l}^{'})= (d_{2p}:d_{2p}^{'}) = (d_{lp}:d_{lp}^{'}) = (1:1)$ . Note that in this case $f_{12, 1j}$ extends to such elements of the boundary but it can not be a homeomorphism. If $c_{jl}^{'} = c_{lp}=0$ we obtain that $d_{2l}=d_{2l}^{'}$ and $d_{lp} = 0$. For $c_{jl}=c_{jl}^{'}$ we have that $(c_{lp}:c_{lp}^{'})=(c_{jp}:c_{jp}^{'})$ which implies $d_{2l}^{'}=0$ and $(d_{lp}:d_{lp}^{'})=(d_{jp}:d_{jp}^{'})$.
\end{itemize}

Altogether we conclude that $f_{12, 1j}$ can not be continuously extended to the subvarieties $\hat{F}_{I} \subset \bar{F}_{n}\setminus F_{n}$, $I=\{j,l,p\}$ given by  $(c_{jl}:c_{jl}^{'}) = (c_{jp}:c_{jp}^{'}) = (c_{lp}:c_{lp}^{'})=(1:1)$ and that it can be continuously,  but not homeomorphically,  extended to the subvarieties $\breve{F}_{I}\subset \bar{F}_{n}\setminus F_{n}$, $I=\{j,l,p\}$ given by  $(c_{jl};c_{jl}^{'})=(c_{jp}:c_{jp}^{'})=(1:0)$. We denote  by $\mathcal{G}(j)$ the family of subvarieties consisting of  all possible non-empty intersections of the subvarieties $\hat{F}_{I}$ and by $\mathcal{H}(j)$ the family of all possible non-empty intersections of the subvarieties $\breve{F}_{I}$.  From the previous discussion we see that the map $f_{12, 1j}$ extends homeomorphically to the complement   in $\bar{F}_{n}$ of the union of the    subvarieties from $\mathcal{G}(j)$ and $\mathcal{H}(j)$, that is to $\bar{F}_{n}\setminus (\mathcal{G}(j)\cup \mathcal{H}(j))$.

Moreover, note that the preimages of the subvarieties $\hat{F}_{I}$   by these  extensions  of  $f_{12,1j}$  are the subvarieties $\breve{F}_{I}$.

We extend  a  homeomorphism $f_{12, 1j}$ to  the homeomorphism $\tilde{f}_{12, 1j} :   \mathcal{F}_{n}\to \mathcal{F}_{n}$  as follows:
\begin{itemize}
\item On the complement of the union of  subvarieties from $\mathcal{G}(j)$ and $\mathcal{H}(j)$ the map $\tilde{f}_{12, 1j}$ is  given by the natural  homeomorphic extension  of $f_{12, 1j}$. 
\item Let $\tilde{S}\in \mathcal{H}(j)$, then  $\tilde{S}=\tilde{F}_{I_1}\cap\cdots \cap \tilde{F}_{I_k}$ for some $I_1, \ldots , I_{k}\subset \{I\subset \{1,\ldots, n\}, \; |I|=3, \; j\in I\}$ and let $\hat{S}\in \mathcal{G}(j)$ is given by $\hat{S}= \hat{F}_{I_1}\cap\ldots\cap \hat{F}_{I_k}$. Then  we define $\tilde{f}_{12, 1j}$ to map homeomorphically $\tilde{S}$ to  an exceptional divisor  $E(\hat{S})$  for $\hat{S}$ in $\mathcal{F}_{n}$. This can be naturally done because of the previous observation on behavior of an extension of the map $f_{12, 1j}$ on the subvarieties $\breve{F}_{(j,l,p)}$.
\item Let $E(\hat{S})\subset \mathcal{F}_{n}$ be an exceptional divisor for $\hat{S}\in \mathcal{G}(j)$ where  $\hat{S}=\hat{F}_{I_1}\cap\ldots\cap \hat{F}_{I_k}$,    $I_1, \ldots , I_{k}\in  \{I\subset \{1,\ldots, n\}, \; |I|=3, \;  j\in I\}$. We define  $\tilde{f}_{12, 1j}$ to map homeomorphically $E(\hat{S})$ to   $\tilde{S}=\tilde{F}_{I_1}\cap\cdots \cap \tilde{F}_{I_k}$, as the inverse of the previously defined extension $\tilde{f}_{12, 1j} : \tilde{S}\to E(\hat{S})$.
\end{itemize}

\end{proof}

 
\section{$\mathcal{F}_{n}$ and moduli space $\overline{\mathcal{M}}_{0,n}$}
\subsection{The main result}
We denote by $\mathcal{M}_{0,n}$, as usual, the moduli space of curves  of genus $0$ with $n$ marked distinct points.  The space $\mathcal{M}_{0,n}$ parametrizes $n$-tuples of distinct points on the Riemann sphere $\C P^1$ up to biholomorphisms, that is
\[
\mathcal{M}_{0,n} = ((\C P^{1})^{n}\setminus \Delta)/PGL_{2}(\C),
\]
where  $\Delta = \bigcup_{i\neq j}\{(x_1, \ldots, x_n)\in (\C P^{1})^{n} | \; x_i=x_j\}$.  It  follows that $\mathcal{M}_{0,n}$ can be identified with
\[
\mathcal{M}_{0,n} = \{ (x_1, \ldots , x_{n-3})\in \C P^{n-3} |\;  x_i\neq 0,1,\infty, \; x_i\neq x_j\}.
\]
For example $\mathcal{M}_{0,3}$ is a points, while $\mathcal{M}_{0,4}=\C P^{1}\setminus \{0,1, \infty\}$.
Note that the moduli space $\mathcal{M}_{0,n}$ coincides with our space of parameters of the main stratum $F_{n}$, compare to~\eqref{spmain}.

The moduli space $\overline{\mathcal{M}}_{0,n}$ is the space of biholomorphism classes of stable curves of genus $0$ with $n$ marked distinct points. It is a compact, complex manifold of dimension $n-3$ in which $\mathcal{M}_{0, n}$ is a Zariski-open subset.
The moduli space  $\overline{\mathcal{M}}_{0,n}$ is a compactification of $\mathcal{M}_{0,n}$ known as the Grothendieck-Deligne-Knudsen-Mumford compactification.

 In~\cite{Keel} Keel  has given an alternative, to that of Grothendieck-Knudson,  construction of  the smooth complete variety $\overline{\mathcal{M}}_{0,n}$.  It has been then noted  in~\cite{LL}  that Theorem~\ref{t2} when applied to his  construction  immediately leads that $\overline{\mathcal{M}}_{0,n}$ is a wonderful compactification  $Y_{\mathcal{G}}$ where $Y =(\C P^1)^{n-3}$ and  the building  set $\mathcal{G}$ consists of the set of all diagonals
 and augmented diagonals. More precisely, $\mathcal{G}$ consists of 
\[
\Delta _{I}==\{(c_4,\ldots c_n)\in (\C P^{1})^{n-3} | c_i=c_j \;  \text{for all} \; i,j\in I\}, 
\]
\[
\Delta _{I,a} =\{(c_4,\ldots c_n)\in (\C P^{1})^{n-3} | c_i=a \; \text{for all} \; i\in I\},\]
where $I\subseteq \{4, \ldots , n\}$, $|I|\geq 2$ and $a\in \{0, 1, \infty\}$. The corresponding arrangement  is the set of all intersections of elements in $\mathcal{G}$. 

Using this,  we prove that our compactification $\mathcal{F}_{n}$ of $F_{n}$, which is  obtained as the wonderful compactification of the building  set $\mathcal{G}_n$     consisting of the  nonsingular subvarieties in  $\bar{F}_{n}$ given in Section~\ref{building}, coincides with the   Grothendieck-Deligne-Knudsen-Mumford compactification of $\mathcal{M}_{0,n} = F_{n}$.

\begin{thm}~\label{modul}
The manifold $\mathcal{F}_{n}$ is diffeomorphic to the manifold $\overline{\mathcal{M}}_{0,n}$, $n\geq 4$.
\end{thm}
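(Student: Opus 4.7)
The strategy is to realize both $\mathcal{F}_n$ and $\overline{\mathcal{M}}_{0,n}$ as iterated blow-ups of the common base $(\C P^{1})^{n-3}$ and then to match their building sets, concluding via \thmref{t2}. The starting identification is $F_n\cong\mathcal{M}_{0,n}$ from~\eqref{spmain}: the basic coordinates $c_i:=(c_{3i}:c_{3i}')$, $4\leq i\leq n$, serve as cross-ratios sending the marked points $p_1,p_2,p_3$ to $0,\infty,1\in\C P^1$. Keel's construction together with \thmref{t2} presents $\overline{\mathcal{M}}_{0,n}=Y_{\mathcal{G}_K}$ with $Y=(\C P^{1})^{n-3}$ and $\mathcal{G}_K$ the building set of all diagonals $\Delta_I$ and augmented diagonals $\Delta_{I,a}$, $a\in\{0,1,\infty\}$, $|I|\geq 2$.

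I plan to split $\mathcal{G}_K$ as $\mathcal{H}\cup\mathcal{G}_K'$, where $\mathcal{H}$ is generated by the augmented diagonals with $a\in\{0,\infty\}$ and $\mathcal{G}_K'$ contains the $\Delta_I$'s and the $\Delta_{I,1}$'s. The first key step is to establish that $\bar{F}_n$ is the wonderful compactification of $(\C P^{1})^{n-3}$ along $\mathcal{H}$. For this I would study the projection $\pi\colon\bar{F}_n\to(\C P^{1})^{n-3}$ onto the basic coordinates. Using~\eqref{relatmain1}, the rational inverse is given by $(c_{ij}:c_{ij}')=(c_{3i}'c_{3j}:c_{3i}c_{3j}')$ for $i,j\geq 4$, and a direct calculation shows this formula is indeterminate exactly where two basic coordinates simultaneously equal $(0:1)$ or $(1:0)$, i.e.\ on $\mathcal{H}$; the value $(1:1)$ causes no collapse. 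Combined with smoothness of $\bar{F}_n$ (Proposition~\ref{smooth}), a local analysis at each stratum identifies $\bar{F}_n$ with the iterated blow-up. The case $n=5$, where $\bar{F}_5=Bl_{(0,0),(\infty,\infty)}(\C P^1)^2$, illustrates the pattern.

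The second key step is to verify that the subvarieties $\hat{F}_I$ from~\eqref{31} are the dominant transforms in $\bar{F}_n$ of the elements of $\mathcal{G}_K'$. A condition $(c_{ij}:c_{ij}')=(1:1)$ translates via~\eqref{relatmain1} into an equality of basic coordinates, so $\hat{F}_{\{3,k,l\}}$ projects under $\pi$ onto $\{c_k=c_l=1\}=\Delta_{\{k,l\},1}$, while for $i\geq 4$ the subvariety $\hat{F}_{\{i,k,l\}}$ projects onto $\{c_i=c_k=c_l\}=\Delta_{\{i,k,l\}}$; closing under intersections recovers all of $\mathcal{G}_K'$. Ordering $\mathcal{G}_K$ so that the elements of $\mathcal{H}$ come first, \thmref{t2} then yields $\overline{\mathcal{M}}_{0,n}=Bl_{\mathcal{G}_n}\bar{F}_n=\mathcal{F}_n$.

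The hardest part will be the first key step: proving that the closure $\bar{F}_n\subset(\C P^{1})^{N}$ coincides \emph{as a scheme} with the iterated blow-up of $(\C P^{1})^{n-3}$ along $\mathcal{H}$, rather than merely sharing the correct exceptional set. One needs a local-chart verification at each stratum $\Delta_{I,a}$ ($a\in\{0,\infty\}$) that the closure introduces precisely one smooth exceptional divisor of the expected dimension, and that successive blow-ups are along smooth centres meeting transversally. A related subtlety is that the dominant transforms of $\Delta_I$ and $\Delta_{I,1}$ used in the second round must be strict transforms rather than scheme-theoretic preimages, so that they genuinely agree with the $\hat{F}_I$; this holds because $\mathcal{H}$ involves only $a\in\{0,\infty\}$, hence is disjoint from the $a=1$ and $\Delta_I$ loci whose transforms we need. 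Once these verifications are secured, the two wonderful compactifications coincide.
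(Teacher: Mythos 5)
Your proposal follows essentially the same route as the paper's proof: the paper also constructs the rational map between $(\C P^{1})^{n-3}$ and $(\C P^{1})^{N}$, identifies $\bar{F}_n$ with the wonderful compactification of $(\C P^{1})^{n-3}$ along the $a\in\{0,\infty\}$ augmented diagonals (the intermediate variety the paper calls $Z$), and then matches the second round of blow-ups along the $\hat{F}_I$'s against Keel's remaining building set to conclude via \thmref{t2}. Your observation that the $\hat{F}_{\{3,k,l\}}$ correspond to $\Delta_{\{k,l\},1}$ while the $\hat{F}_{\{i,k,l\}}$ with $i\geq 4$ correspond to the full diagonals $\Delta_{\{i,k,l\}}$ is exactly the needed identification, and you spell it out more explicitly than the paper's general argument does (the paper records this clearly only in the $n=6$ computation); one small correction is that the reason the dominant transforms of $\Delta_I$ and $\Delta_{I,1}$ are strict transforms is not that they are disjoint from the $\mathcal{H}$-centres — they are not — but that they are never contained in any such centre.
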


\begin{rem}
Recall that for $n=4$ it is  already  known that $\mathcal{F}_{4}= \C P^1  = \overline{\mathcal{M}}_{0,4}$. Also, for $n=5$  it has already been  noted  in~\cite{BT-2}, Remark 7.13  that $\mathcal{F}_{5}$ coincides with $\overline{\mathcal{M}}_{(0,5)}$.
\end{rem}

\subsection{$\mathcal{F}_{6}$ and $\overline{\mathcal{M}}_{0,6}$}

For the sake of clearness we first elaborate Theorem~\ref{wcom}  and prove Theorem~\ref{modul} in the case of Grassmannian $G_{6,2}$. In addition, $G_{6,2}$ is of particular importance in algebraic geometry being one of the six   Severi varieties,~\cite{LM}.
For $n=6$  we start with  the manifold 
\[
\bar{F}_{6}=\{((c_{34}:c_{34}^{'}), (c_{35}:c_{35}^{'}), (c_{36}:c_{36}^{'}), (c_{45}:c_{45}^{'}), (c_{46}:c_{46}^{'}), (c_{56}:c_{56}^{'}))\in (\C P^{1})^{6},\]
\[c_{34}^{'}c_{35}c_{45}^{'} = c_{34}c_{35}^{'}c_{45},\;  c_{34}^{'}c_{36}c_{46}^{'} = c_{34}c_{36}^{'}c_{46},
\]
\[  c_{35}^{'}c_{36}c_{56}^{'} = c_{35}c_{36}^{'}c_{56},\;  c_{45}^{'}c_{46}c_{56}^{'} = c_{45}c_{46}^{'}c_{56}, \}
\]

The building set is given by the following subvarieties in $\bar{F}_{6}$:
\[
\hat{F}_{345}=\{((1:1), (1:1), (c_{36}:c_{36}^{'}), (1:1), (c_{46}: c_{46}^{'}), (c_{56}:c_{56}^{'})),  
\]
\[
c_{36}c_{46}^{'} = c_{36}^{'}c_{46},\;   c_
{36}c_{56}^{'} = c_{36}^{'}c_{56},\;   c_{46}c_{56}^{'} = c_{46}^{'}c_{56}\} 
\]
\[
\hat{F}_{346}=\{((1:1), (c_{35}:c_{35}^{'}), (1:1), (c_{45}:c_{45}^{'}), (1: 1), (c_{56}:c_{56}^{'})),
 \]
\[
c_{35}c_{45}^{'} = c_{35}^{'}c_{45},\;  c_{35}c_{56}^{'} = c_{35}^{'}c_{56}, \; c_{45}c_{56}^{'} = c_{45}^{'}c_{56}\} 
\]
\[
\hat{F}_{356}=((c_{34}:c_{34}^{'}), (1:1), (1:1), (c_{45}:c_{45}^{'}), (c_{46}: c_{46}^{'}), (1:1)),
 \]
\[
c_{34}c_{45}^{'} = c_{34}^{'}c_{45},\;  c_{34}c_{46}^{'} = c_{34}^{'}c_{46},\;  c_{45}c_{46}^{'} = c_{45}^{'}c_{46}\}
\]
\[
\hat{F}_{456} = \{(c_{34}:c_{34}^{'}), (c_{35}:c_{35}^{'}), (c_{36}:c_{36}^{'}), (1:1), (1:1), (1:1))\},
\]
\[
c_{34}c_{35}^{'} = c_{34}^{'}c_{35},\;  c_{34}c_{36}^{'} = c_{34}^{'}c_{36},\;  c_{35}^{'}c_{36}=c_{35}c_{36}^{'}\}.
\]
together with the point $S= (1:1)^{6}$.  At this point  any of these two subvarieties  intersect. 


The smooth, compact manifold $\mathcal{F}_{6}$ is a wonderful compactification with the building set $\mathcal{G}_{6}=\{ S, \hat{F}_{345}, \hat{F}_{346}, \hat{F}_{356},\hat{F}_{456}\}$ that is
\begin{equation}\label{bl6}
\mathcal{F}_{6}= Bl_{\tilde{F}_{456}}Bl_{\tilde{F}_{356}}Bl_{\tilde{F}_{346}}Bl_{\tilde{F}_{345}}Bl_{S}\bar{F}_{6}.
\end{equation}
Note that the dominant transform $\tilde{F}_{ijk}$  in $Bl_{S}\bar{F}_{6}$  of any  of the submanifolds  $\hat{F}_{ijk}$, $3\leq i<j<k\leq 6$  intersect an exceptional divisor
$\C P^2$ at an one point.  In addition,   the  four points obtained in such a way are different. It implies that the wonderful compactification given by~\eqref{bl6} does not depend on an  order of the blow ups along the subvarieties  $\tilde{F}_{ijk}$.



We  show that the manifold $\mathcal{F}_{6}$ coincide  with the moduli space $\overline{\mathcal{M}}_{0,6}$. As we already mentioned  the construction from~\cite{Keel}  describes  $\overline{\mathcal{M}}_{0,6}$  as  a sequence of blow-ups , which is then used   in~\cite{LL}  to note  that  $\overline{\mathcal{M}}_{0,6}$ is a wonderful compactification  $Y_{\mathcal{G}}$ where $Y =(\C P^1)^{3}$ and the building  set $\mathcal{G}$ consists of the set of all diagonals
\[
\Delta _{I} =\{ (p_1,p_2, p_{3})\in (\C P^{1})^{3} | p_i =p_j\; \text{for}\; i,j\in I\}
\]
and augmented diagonals
\[
\Delta _{I, a} =\{ (p_1, p_2 , p_{3})\in (\C P^{1})^{3} | p_i = a \; \text{for all} \; i\in I\}
\]
where $I\subset \{1, 2, 3\}$, $|I|\geq 2$ and $a\in A=  \{ 0,1, \infty\}$.

Note that $\Delta_{I}$ is a complex two-dimensional submanifold in $(\C P^{1})^3$ for $|I|=2$,  so the  blowing up $(\C P^{1})^3$ along the diagonals  $\Delta _{I}$, $|I|=2$  leaves $(\C P^{1})^3$ unchanged. Thus, in the wonderful compactification which describes $\overline{\mathcal{M}}_{0,6}$  it is enough to consider,  as a building  set,  the complete diagonal  $\Delta _{123}$ and augmented diagonals $\Delta _{I,a}$.

\begin{thm}
The manifold  $\mathcal{F}_{6}$ is diffeomorphic to   the space $\overline{\mathcal{M}}_{0,6}$.
\end{thm}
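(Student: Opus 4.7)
I would prove the theorem by realising both $\mathcal{F}_{6}$ and $\overline{\mathcal{M}}_{0,6}$ as wonderful compactifications of the common ambient variety $Y=(\C P^{1})^{3}$ with the same building set, then invoking Theorem~\ref{t2}.

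The first step is to identify $\bar{F}_{6}$ itself with a wonderful compactification of $(\C P^{1})^{3}$. Consider the projection $\pi:\bar{F}_{6}\to(\C P^{1})^{3}$ onto the first three parameters $((c_{34}{:}c_{34}'),(c_{35}{:}c_{35}'),(c_{36}{:}c_{36}'))$. By \eqref{relatmain1} the remaining three parameters are rational functions whose indeterminacy is concentrated on the six lines $\Delta_{\{i,j\},a}$ with $\{i,j\}\subset\{4,5,6\}$ and $a\in\{0,\infty\}$. These lines meet pairwise, cleanly, only at the two triple points $(0,0,0)$ and $(\infty,\infty,\infty)$, so together with those two points they form a building set
\[
\mathcal{G}_{0}=\{(0,0,0),(\infty,\infty,\infty)\}\cup\bigl\{\Delta_{\{i,j\},a}:\{i,j\}\subset\{4,5,6\},\,a\in\{0,\infty\}\bigr\},
\]
admissibly ordered with the two points first. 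A chart-by-chart local check then shows that the iterated blow-up $((\C P^{1})^{3})_{\mathcal{G}_{0}}$ is canonically isomorphic to $\bar{F}_{6}$, realising $\pi$ as the structure map of the wonderful compactification.

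Next I push the building set $\mathcal{G}_{6}$ down under $\pi$. Using \eqref{31} together with \eqref{relat}, one verifies directly that $S=(1{:}1)^{6}\mapsto\Delta_{\{4,5,6\},1}=(1,1,1)$; the curves $\hat{F}_{345},\hat{F}_{346},\hat{F}_{356}$ map isomorphically onto $\Delta_{\{4,5\},1},\Delta_{\{4,6\},1},\Delta_{\{5,6\},1}$; and $\hat{F}_{456}$ maps isomorphically onto the main diagonal $\Delta_{\{4,5,6\}}$. None of these images is contained in the $\mathcal{G}_{0}$-locus, so each element of $\mathcal{G}_{6}$ is the strict transform of its image. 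Setting
\[
\mathcal{G}_{1}=\bigl\{\Delta_{\{4,5,6\},1},\,\Delta_{\{4,5\},1},\,\Delta_{\{4,6\},1},\,\Delta_{\{5,6\},1},\,\Delta_{\{4,5,6\}}\bigr\},
\]
the iterated blow-up \eqref{bl6} defining $\mathcal{F}_{6}$ equals the iterated blow-up of $(\C P^{1})^{3}$ along $\mathcal{G}_{0}\cup\mathcal{G}_{1}$ in the order $(\mathcal{G}_{0},\mathcal{G}_{1})$. Checking that $\mathcal{G}_{0}\cup\mathcal{G}_{1}$ is closed under clean intersection shows it is a building set, whence $\mathcal{F}_{6}=((\C P^{1})^{3})_{\mathcal{G}_{0}\cup\mathcal{G}_{1}}$ by Theorem~\ref{t2}.

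Finally, $\mathcal{G}_{0}\cup\mathcal{G}_{1}$ consists of the three triple points $\Delta_{\{4,5,6\},a}$, $a\in\{0,1,\infty\}$, the nine lines $\Delta_{\{i,j\},a}$ with $\{i,j\}\subset\{4,5,6\}$ and $a\in\{0,1,\infty\}$, and the main diagonal $\Delta_{\{4,5,6\}}$ -- i.e., Keel's building set $\mathcal{G}_{K}$ for $\overline{\mathcal{M}}_{0,6}$ modulo the codimension-one $2$-diagonals, whose blow-up is trivial. Thus
\[
\mathcal{F}_{6}\;=\;((\C P^{1})^{3})_{\mathcal{G}_{0}\cup\mathcal{G}_{1}}\;=\;((\C P^{1})^{3})_{\mathcal{G}_{K}}\;=\;\overline{\mathcal{M}}_{0,6}.
\]
The principal obstacle lies in the first step: rigorously proving that $\bar{F}_{6}$ is the wonderful compactification of $(\C P^{1})^{3}$ along $\mathcal{G}_{0}$. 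This requires explicit affine-chart computations matching the exceptional divisors of the iterated blow-up with the structure imposed on $\bar{F}_{6}$ by the cubic equations \eqref{relat}; once that is in hand, the remaining steps reduce to straightforward checks of intersection combinatorics and the admissibility of the orderings required by Theorem~\ref{t2}.
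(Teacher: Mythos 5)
Your proposal is correct and follows essentially the same approach as the paper: both identify $\bar{F}_{6}$ with a wonderful compactification of $(\C P^{1})^{3}$ along the six augmented $2$-diagonals over $\{0,\infty\}$ together with the two triple points (the paper constructs the inverse map $f$ and extends it to a diffeomorphism $\bar f : Z \to \bar F_6$, you use the projection $\pi = f^{-1}$; same identification), then push the building set $\mathcal{G}_{6}$ down to the remaining elements $\{\Delta_{\{i,j\},1}, (1,1,1), \Delta_{\text{main}}\}$ of Keel's building set, and finish by noting that the $2$-diagonals of codimension one give trivial blow-ups. The only difference from the paper is cosmetic (index labels $\{4,5,6\}$ versus $\{1,2,3\}$ and your explicit emphasis on admissibility of the ordering $(\mathcal{G}_{0},\mathcal{G}_{1})$, which the paper leaves implicit).
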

\begin{proof}
Let us consider the smooth  map $f : (\C P^{1})^{3} \to (\C P^{1})^{6}$ given by
\[
f((c_{34}:c_{34}^{'}), (c_{35}: c_{35}^{'}), (c_{36}:c_{36}^{'})) = 
\]
\[
((c_{34}:c_{34}^{'}), (c_{35}: c_{35}^{'}), (c_{36}:c_{36}^{'}), (c_{34}^{'}c_{35} : c_{34}c_{35}^{'}), (c_{34}^{'}c_{36}:c_{34}c_{36}^{'}), (c_{35}^{'}c_{36}:c_{35}c_{36}^{'})).
\]

This map is not defined at the points of the following submanifolds in $(\C P^{1})^{3}$:
\[
\Delta_{\{1,2\}, \infty} = \{(1:0), (1:0), (c_{36}:c_{36}^{'}))\},\;  \Delta_{\{1,2\}, 0} = \{(0:1), (0:1), (c_{36}:c_{36}^{'}))\},
\]
 \[
\Delta_{\{1,3\}, \infty} = \{(1:0), (c_{35}:c_{35}^{'}), (1:0))\},\;  \Delta_{\{1,3\}, 0} = \{(0:1), (c_{35}:c_{35}^{'}), (0: 1))\},
\]
 \[
\Delta_{\{2,3\}, \infty} = \{(c_{34}:c_{34}^{'}), (1:0), (1:0))\}, \;  \Delta_{\{2,3\}, 0} = \{(c_{34}:c_{34}^{'}), (0:1), (0:1))\}.
\]
Then for $P = ((1:0), (1:0), (1:0))$ and  $Q= ((0:1), (0:1), (0:1))$
\[
\mathcal{G}^{'} =  \{\Delta_{\{i,j\}, a}, a=0, \infty\} \cup  \{P, Q\}
\]
is a building set  in $(\C P^{1})^{3}$  and  we  consider the wonderful compactification $Z=(\C P^{1})^{3}_{\mathcal{G}^{'}}$.  The map $f$ extends to the diffeomorphism  $\bar{f}$  between $Z$ and $\bar{F}_6$. For example the points of the divisor
 $\C P^1$  along the submanifold  $\Delta_{\{1, 2\},0}$,  the map $\bar{f}$ maps  by
\[
\bar{f}( (1:0), (1:0), (c_{36}:c_{36}^{'}), (x_1:x_2)) = ((1:0), (1:0), (c_{36}:c_{36}^{'}), (x_1:x_2), (0:1), (0:1),
\]
while  the points $(x_1:x_2:x_3)$  of the divisor $\C P^2$  at  the point $P$,  the map $\bar{f}$ maps by 
\[
\bar{f}(P, (x_1:x_2:x_3)) = ((1:0), (1:0), (1:0), (x_1:x_2), (x_1:x_3), (x_2:x_3)).
\]
Since in the neighborhood $((1 : c_{34}^{'}), (1 : c_{35}^{'}), (1 : c_{36}^{'}))$  of $P= ((1:0), (1:0), (1:0))$ it holds 
\[
c_{34}^{'}x_2 = c_{35}^{'}x_1, \; c_{34}^{'}x_3 = c_{36}^{'}x_1, \; c_{35}^{'}x_3=c_{36}^{'}x_2,
\]
it follows that the points $((1:0), (1:0), (1:0), (x_1:x_2), (x_1:x_3), (x_2:x_3))$ belong to $\bar{F}_6$. 

In order to finish the proof it is left to note   that the wonderful compactification for $\bar{F}_6$ with the  building set consisting of $\bar{F}_{ijk}$ and $S$ corresponds to the wonderful compactification for  $Z$ with the building set consisting of  $\Delta _{\{1,2\},1}$, $\Delta _{\{1,3\}, 1}$, $\Delta_{\{2,3\},1}$,
$R=((1:1), (1:1), (1:1))$ 
and the diagonal $\Delta _{123}$. It follows  that $\mathcal{F}_{6}$ and $\overline{\mathcal{M}}_{0,6}$ are diffeomorphic.  
\end{proof}

\subsection{The proof of the main result}

\begin{proof} (of Theorem~\ref{modul}). The proof proceeds in an analogous way as for $n=6$.  According to~\eqref{relatmain1},~\eqref{relatmain2} and~\eqref{spmain} one can start with the smooth map $f : (\C P^{1})^{n-3} \to (\C P^{1})^{N}$, $N = {n-2\choose 2}$  given by
\[
f((c_{34}:c_{34}^{'}),\ldots , (c_{3n}:c_{3n}^{'})) = 
\]
\[
((c_{34}:c_{34}^{'}), \ldots , (c_{3n}:c_{3n}^{'}), (c_{34}^{'}c_{35} : c_{34}c_{35}^{'}),\ldots , (c_{3n-1}^{'}c_{3n}:c_{3n-1}c_{3n}^{'})).
\]
The map $f$ is not defined at the points of the submanifolds $G_{pq}, G_{pg}^{'}\subset (\C P^{1})^{n-3}$, $4\leq p<q\leq n$  given by
\[
G_{pq} = \{((c_{3i}:c_{3j}))\in (\C P^{1})^{n-3} | (c_{3p}:c_{3p}^{'})=(c_{3q}:c_{3q}^{'})=(1:0)\}
\]
\[
G_{pq}^{'} = \{((c_{3i}:c_{3j}))\in (\C P^{1})^{n-3} | (c_{3p}:c_{3p}^{'})=(c_{3q}:c_{3q}^{'})=(0:1)\}.
\]
From~\eqref{relatmain1} and~\eqref{relatmain2} it obviously follows that the map $f$ gives a diffeomorphism between $(\C P^{1}_{A})
^{n-3}\setminus \Delta$ and the space of parameters of the main stratum $F_{n}$.

It is easy to verify that the set $\mathcal{G}^{'}$ of all possible intersections of the subvarieties $G_{pq}, G_{pq}^{'}$ form a building set.
Let $Z$ be a smooth manifold obtained as the wonderful compactification of $(\C P^{1})^{n-3}$ with the building set $\mathcal{G}^{'}$ that is
$Z= (\C P^{1})^{n-3}_{\mathcal{G}^{'}}$. Then, as in the case $n=6$, we see that  the map $f$ extends to the diffeomorphism $\bar{f}$ between $Z$ and $\bar{F}_{n}$.

Let further 
\[
H_{pq} = \{((c_{3i}:c_{3j}))\in (\C P^{1})^{n-3} | (c_{3p}:c_{3p}^{'})=(c_{3q}:c_{3q}^{'})=(1:1)\}
\]
and let $\tilde{H}_{pq}$ be a proper transform of $H_{pq}$ in $Z$. The set $\mathcal{G}^{"}$ of all possible intersections of subvarieties $\tilde{H}_{pq}$ is  a building. It is explained   in~\cite{LL}, subsection 4.4,  that the manifold $\overline{\mathcal{M}}(0, n)$ coincides  with the wonderful compactification $Z_{\mathcal{G}^{"}}$. It is left to note that diffeomorphism $\bar{f}$ extends to the diffeomorphism between  the wonderful compactification $Z_{\mathcal{G}^{''}}$ and    the wonderful compactification $(\bar{F}_{n})_{\mathcal{G}}$, where the building set $\mathcal{G}$ is given by all possible intersections of the subvarieties~\eqref{31}. Thus, the smooth manifolds $\overline{\mathcal{M}}(0,n)$ and $\mathcal{F}_{n}$ coincides.
\end{proof}

\section{ $\mathcal{F}_{n}$ and  Chow quotient $G_{n,2}\!/\!/(\C ^{\ast})^{n}$}

\subsection{Basic facts on  Chow varieties and Chow quotient}
We follow the monograph~\cite{GKZ}, Chapter 4  to recall the basic facts on Chow varieties, while for the  notion of Chow quotient  $G_{n,k}\!/\!/(\C ^{\ast})^{n}$ we follow the paper of Kapranov~\cite{Kap}. The idea behind the definition of the Chow quotient  is the construction from algebraic geometry  known as  the Chow varieties, that is compact varieties whose points parametrize algebraic cycles in a given variety  of the same dimension and degree. The Chow variety for $G_{n,k}$  which is needed  in the definition of the Chow quotient  can be defined as follows. Let $\delta \in H_{2(n-1)}(G_{n,k}, \Z)$ be the homology class of the closure of a  generic $(\C ^{\ast})^{n}$ - orbit in $G_{n,k}$ and   let  $C_{2(n-1)}(G_{n,k}, \delta)$ denotes the set of all algebraic cycles in $G_{n,k}$ of dimension $2(n-1)$ whose homology class is $\delta$.  The Grassmann  manifolds $G_{n,k}$ embeds into $\C P^{N}$, $N={n\choose k}-1$ via Pl\"ucker embedding, so let  $d\in H_{2(n-1)}(\C P^{N}, \Z)\cong \Z$ be the image of the class $\delta$ under this embedding. Now, consider the set $G(N, d, 2(n-1))$  of algebraic cycles in $\C P^{N}$ of dimension $2(n-1)$ and degree $d$.  In other words, one considers algebraic cycles  whose multiplicity in $H_{2(n-1)}(\C P^{N}, \Z)$, regarded to the canonical generator, is $d$.  Denote by $\mathcal{B}$ the coordinate ring of $G_{n,k}$ via the Pl\"ucker embedding, that is the   quotient of the polynomial ring $\C [z_{1}, \ldots z_{N+1}]$  by the Pl\"ucker relations  and denote by $\mathcal{B}_{d}$  a complex linear subspace which is given by the  homogeneous part of $\mathcal{B}$ of degree $d$.  By the theorem of Chow and van der Waerden, the set $G(N, d, 2(n-1))$ becomes a closed projective algebraic variety, in particular compact, via Chow embedding $G(N, d, 2(n-1))\to P(\mathcal{B}_{d})$.  The set $C_{2(n-1)}(G_{n,k}, \delta)$ endowed with the  resulting structure  of the algebraic variety via  $C_{2(n-1)}(G_{n,k}, \delta)\subset G(N, d, 2(n-1))$ is the needed Chow variety for $G_{n,k}$.

  In more detail, the mentioned Chow embedding is defined as follows, see~\cite{GKZ}.  For any irreducible algebraic cycle  $X\in G(N, d, 2(n-1))$  one can consider the set $\mathcal{Z}(X)$  of all $(N-2(n-1)-1)$ - dimensional projective subspaces $L$ in $\C P^{N}$ which intersect $X$.  The set $\mathcal{Z}(X)$ is a  subvariety in  the  Grassmannian $G(N, N-2(n-1)+1)$. It can be  proved  that $\mathcal{Z}(X)$ is defined by some element $R_{X}\in \mathcal{B}_{d}$ which is unique up to constant factor and $R_{X}$ is called the Chow form of $X$.  If $X$ is not irreducible cycle then $X=\sum a_iX_i$, where $X_i$ are $2(n-1)$-dimensional closed irreducible varieties and $a_i$ are non-negative integer coefficients, and the Chow form for $X$ is defined by $R_X=\prod R_{X_i}^{a_i}\in \mathcal{B}_{d}$. The map $X\to R_{X}$ defines an embedding of $G(N, d, 2(n-1))$ into the projective space $P(\mathcal{B}_{d})$ which is called the Chow embedding.

In order to define the Chow quotient one considers  the natural map 
\[
W/(\C ^{\ast})^{n} \to C_{2(n-1)}(G_{n,k}, \delta), \;\;  x \to \overline{(\C ^{\ast})^{n}\cdot x},
\]  
where $W$ is the main stratum in $G_{n,k}$ which consists of the points whose all Pl\"ucker coordinates are non-zero.
By definition, the Chow quotient  $G_{n,k}\!/\!/(\C ^{\ast})^{n}$ is the closure of the  image of this map.

We recall the following results from~\cite{Kap}, Propositions (1.2.11), (1.2.15)  which give the  description of  the build up components  of  $W/(\C ^{\ast})^{n}$ in  $G_{n,k}\!/\!/(\C ^{\ast})^{n}$. 

\begin{prop}\label{norost}
The algebraic cycles in the Chow  quotient  $G_{n,k}\!/\!/ (\C ^{\ast})^{n}$ are of the form
$Z= \sum _{i}Z_{i}$, where $Z_i$ are the closures of $(\C ^{\ast})^{n}$-orbits in $G_{n,k}$ such that the matroid polytopes $\mu (Z_i)$  give a polyhedral decomposition of $\Delta_{n,k}$.   
\end{prop}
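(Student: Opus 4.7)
The plan is to analyse each limit cycle in $G_{n,k}\!/\!/(\C^{\ast})^{n}$ via the moment map $\mu\colon G_{n,k}\to\R^{n}$, which sends the closure of any $(\C^{\ast})^{n}$-orbit to the corresponding matroid polytope. First I would recall that for $x\in W$ the closure $\overline{(\C^{\ast})^{n}\cdot x}$ is a toric subvariety of $G_{n,k}$ of complex dimension $n-1$ (the diagonal subtorus acts trivially) with moment polytope $\Delta_{n,k}$ and homology class $\delta$; moreover, the Euclidean volume of $\mu(Y)$ for any $(\C^{\ast})^{n}$-orbit closure $Y$ coincides, up to a fixed normalisation, with the intersection-theoretic degree of $Y$ as a $2(n-1)$-cycle.

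Next I would establish the structural form $Z=\sum_{i} Z_{i}$. The Chow variety $C_{2(n-1)}(G_{n,k},\delta)$ carries a natural $(\C^{\ast})^{n}$-action, and the map $W/(\C^{\ast})^{n}\to C_{2(n-1)}(G_{n,k},\delta)$ is equivariant with trivial action on the image, so by continuity every point of $G_{n,k}\!/\!/(\C^{\ast})^{n}$ is a $(\C^{\ast})^{n}$-invariant cycle. Decomposing such a cycle into irreducible components, each $Z_{i}$ is a $(\C^{\ast})^{n}$-invariant irreducible subvariety of complex dimension $n-1$, and hence equals the closure of a single $(\C^{\ast})^{n}$-orbit. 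Applying the moment map and using continuity of the homology class under specialisation yields
\begin{equation*}
\sum_{i} a_{i}\,\mathrm{vol}(\mu(Z_{i}))=\mathrm{vol}(\Delta_{n,k}),
\end{equation*}
where the $a_{i}$ are multiplicities. Since each $\mu(Z_{i})\subseteq\Delta_{n,k}$ and two distinct orbit polytopes can share only boundary faces (by torus-equivariance of $\mu$), the $\mu(Z_{i})$ must tile $\Delta_{n,k}$, and the volume identity forces all $a_{i}=1$, giving the claimed polyhedral decomposition.

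The hardest step will be the converse: given any polyhedral decomposition of $\Delta_{n,k}$ by matroid polytopes $P_{i}=\mu(Z_{i})$, exhibit a one-parameter family of generic orbit closures whose Chow-variety limit is precisely $\sum_{i} Z_{i}$. I would follow Kapranov's toric degeneration strategy: encode the decomposition by a piecewise-linear regular function on $\Delta_{n,k}$, lift it to a one-parameter subgroup $\lambda\colon\C^{\ast}\to(\C^{\ast})^{n}$, and study the flat limit $\lim_{t\to 0}\lambda(t)\cdot\overline{(\C^{\ast})^{n}\cdot x}$ for a generic $x\in W$. Identifying this limit with $\sum_{i} Z_{i}$ is the main obstacle, as it requires a careful comparison of Chow forms under the degeneration together with the fact that every regular polyhedral subdivision of $\Delta_{n,k}$ into matroid polytopes is realised by such a one-parameter subgroup; density of regular subdivisions and closedness of the Chow quotient then deliver the general case.
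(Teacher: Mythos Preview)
The paper does not prove this proposition at all: it is quoted verbatim from Kapranov~\cite{Kap}, Propositions~(1.2.11) and~(1.2.15), and used as input. So there is no in-paper proof to compare your attempt against; what you are really reconstructing is Kapranov's argument.

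Your outline is broadly in the spirit of Kapranov's proof (torus-invariance of limit cycles, identification of irreducible components with orbit closures, moment-map/volume bookkeeping, and toric degeneration for the realisation direction). Two remarks, however. First, the ``converse'' you single out as the hardest step is not part of the proposition as stated here: the proposition only asserts that every cycle in the Chow quotient has the indicated form, not that every matroid-polytope decomposition of $\Delta_{n,k}$ is realised by some cycle. That realisation statement is Kapranov's (1.2.15), which the paper also cites, but it is a separate claim.

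Second, and more substantively, there is a genuine gap in your forward direction. You write that ``two distinct orbit polytopes can share only boundary faces (by torus-equivariance of $\mu$)'', and then use the volume identity to conclude tiling. Torus-equivariance of $\mu$ does not give this: for an \emph{arbitrary} $(\C^{\ast})^{n}$-invariant cycle of class $\delta$ there is no reason the moment images of its components cannot overlap in their interiors. The non-overlapping is precisely what has to be extracted from the fact that $Z$ is a \emph{Chow limit} of generic orbit closures, not merely an invariant cycle of the right class. In Kapranov's argument this comes from analysing the specialisation of the Chow form (equivalently, the toric degeneration of the generic orbit closure) and seeing that the limit components correspond to the maximal cells of a regular subdivision of $\Delta_{n,k}$; the disjointness of interiors is built into that subdivision, not deduced a posteriori from a volume count. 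Your volume identity is then a consequence, not the mechanism. If you want a self-contained argument, you need to replace the equivariance claim by an actual degeneration analysis at this point.
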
 

We point out that  the matroid polytopes defined in~\cite{Kap} in the case $G_{n,2}$ coincide with our admissible polytopes~\cite{BT-n2}. 
Note that,  in our terminology,  the Chow quotient  gives  actually  a compactification of the space of parameters $F_n$ of the main stratum.   

We want to emphasize that the Chow quotient $X\!/\!/H$  can be defined for any complex projective variety $X\subset \C P^{N}$  with an action of an algebraic group $H$, see~\cite{Kap}. Namely,  the orbit closure $\overline{H\cdot x}$  is a compact subvariety in $X$ for any point $x\in X$ and 
for  a small Zariski open $H$-invariant  subset $U\subset X$ which consists of generic points all  varieties $\overline{H \cdot x}$ for $x\in U$  have the same dimension $m$ and represent the same homology class $\delta \in H_{2m}(X, \Z)$. One can consider  the Chow variety $C_{2m}(X, \delta)\subset  G(N, d, 2m)$, where $d$ is an image of $\delta$ by an embedding  $X\to \C P^{N}$,  and the Chow quotient $X\!/\!/H$   is the closure of the image of the map $U/H \to C_{2m}(X, \delta)$ defined by  $x\to \overline{x\cdot H}$.


Using Gel'fand-MacPherson construction  Kapranov  in~\cite{Kap}   proved that for $k=2$ the Chow quotient $G_{n,2}\!/\!/(\C ^{\ast})^{n}$ is isomorphic to the Chow quotient  $(\C P^{k-1})^{n}\!/\!/GL(k)$. Appealing on this,  he constructed an  isomorphism between the Chow quotient    $G_{n,2}\!/\!/(\C ^{\ast})^{n}$ and the Grothendieck-Knudsen compactification $\overline{\mathcal{M}}_{0,n}$. In addition using the construction of such isomorphism Kapranov in~\cite{Kap}  provided the description of the quotient $G_{n,2}\!/\!/(\C ^{\ast})^{n}$ as the sequence of blow ups along some specified subvarieties in $\C P^{n-3}$. 

 In conclusion, we want to point  that  Theorem~\ref{modul} implies that our  construction of  the   space $\mathcal{F}_{n}$  provides the new,  purely topological   approach for the description  of the Chow quotient $G_{n,2}\!/\!/(\C ^{\ast})^{n}$. 

\subsection{The structures of   $G_{n,2}\!/\!/(\C ^{\ast})^{n}$ and  $G_{n,2}/T^n$}

In~\cite{BT-n2} we  described the orbit space $G_{n,2}/T^n$ in terms of CW complex of admissible polytopes and the universal space of parameters $\mathcal{F}_{n}$. In this description, fundamental  role has the chamber decomposition of $\Delta_{n,2}$ induced by the admissible polytopes.   Using the results of~\cite{Kap}, first we describe  the Chow quotient    $G_{n,2}\!/\!/(\C ^{\ast})^{n}$ in terms of cort\'eges of admissible polytopes which  give the polyhedral decomposition for $\Delta _{n,2}$ and the spaces of parameters of these polytopes. In addition,  we describe    $G_{n,2}\!/\!/(\C ^{\ast})^{n}$ in terms of the virtual spaces of parameters for the admissible polytopes which form a  $(n-1)$-dimensional chamber in $\Delta_{n,2}$.

Let $\mathcal{P}$ denotes  the  family of admissible polytopes of dimension $n-1$  for the standard $T^n$ -  action on $G_{n,2}$. Let  the set $\{ \mathcal{P}_{1}, \ldots , \mathcal{P}_{l}\}$ consists of all subfamilies $\mathcal{P}_{i}=\{P_{i_1}, \ldots , P_{i_s}\} \subset \mathcal{P}$ such that the polytopes  $P_{i_1}, \ldots , P_{i_s}$  give a polyhedral decomposition for $\Delta _{n,2}$, that is 
$\cup _{j=1}^{s}P_{i_j}=\Delta _{n,2}$ and $\stackrel{\circ}{P}_{i_j}\cap \stackrel{\circ}{P}_{i_k} = \emptyset$ for any $1\leq j<k\leq s$.  

We assign to  a family $\mathcal{P}_{i} = \{P_{i_1}, \ldots , P_{i_s}\}$ the set $\mathcal{W}_{i}= \{W_{i_1},\ldots, W_{i_s}\}$, where  $W_{i_j}$ are the strata in $G_{n,2}$ which  correspond to the admissible polytopes $P_{i_j}$, that is $\mu (W_{i_j}) = \stackrel{\circ}{P}_{i_j}$.  By factorizing  these strata by $(\C ^{\ast})^{n}$-action,  we can  assign to any $\mathcal{P}_{i}$ the set $\{ F_{i_1}, \ldots , F_{i_s}\}$. 

Further, we introduce the space of parameters $\mathcal{F}_{i}$  of a family $\mathcal{P}_{i}$ as the multiset product  of the set 
$\{F_{i_1}, \ldots , F_{i_s}\}$, meaning that it is  a topological space homeomorphic to the direct product $\mathcal{F}_{i} = F_{i_1} \times \cdots \times F_{i_s}$.    

It follows  from Proposition~\ref{norost} that the Chow quotient $G_{n,2}\!/\!/(\C ^{\ast})^{n}$  is the {\it disjoint} union  of the connected components $\mathcal{C}_{i}$, each  consisting  of algebraic cycles determined by a family $\mathcal{P}_{i}$, $1\leq i\leq l$. In particular,   the complement of $F_{n}$ in $G_{n,2}\!/\!/(\C ^{\ast})^{n}$ is the disjoint union of the components  $\mathcal{C}_{i}$ for which  $\mathcal{P}_{i} \neq \{\Delta_{n,2}\}$.

\begin{prop}\label{bij}
There is a bijection between $\mathcal{F}_{i}$ and $\mathcal{C}_{i}$ for any $1\leq i\leq l$.
\end{prop}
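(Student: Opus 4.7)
The plan is to construct an explicit map $\Phi_i : \mathcal{F}_i \to \mathcal{C}_i$ directly from the definitions, and then verify it is a bijection by leaning on Proposition~\ref{norost}. Given a tuple $(x_1,\ldots,x_s) \in \mathcal{F}_i = F_{i_1}\times\cdots\times F_{i_s}$, each coordinate $x_j$ is represented by a $(\C^{\ast})^n$-orbit $O_j \subset W_{i_j}$, since $F_{i_j} = W_{i_j}/(\C^{\ast})^n$. Set $Z_j = \overline{O_j}$ inside $G_{n,2}$ and define $\Phi_i(x_1,\ldots,x_s) = Z_1+\cdots+Z_s$. The moment (admissible) polytope of a generic orbit in $W_{i_j}$ is $P_{i_j}$, so the collection $\{\mu(Z_j)\}_{j=1}^{s} = \mathcal{P}_i$ is a polyhedral decomposition of $\Delta_{n,2}$; Proposition~\ref{norost} therefore guarantees that $\Phi_i(x_1,\ldots,x_s)$ is an algebraic cycle in $G_{n,2}\!/\!/(\C^{\ast})^n$ lying in the connected component $\mathcal{C}_i$. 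The map is evidently independent of which orbit representative is chosen.

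For surjectivity, let $Z \in \mathcal{C}_i$. By Proposition~\ref{norost}, $Z$ decomposes as $Z = \sum_{j} Z'_j$ where each $Z'_j$ is the closure of some $(\C^{\ast})^n$-orbit and where the admissible polytopes $\mu(Z'_j)$ are precisely the members of $\mathcal{P}_i$. Reindexing so that $\mu(Z'_j) = P_{i_j}$, the open dense orbit of $Z'_j$ lies in the unique stratum whose admissible polytope is $P_{i_j}$, namely $W_{i_j}$, and so defines a class $x_j \in F_{i_j}$. Then $\Phi_i(x_1,\ldots,x_s) = Z$. For injectivity, suppose $\sum_j \overline{O_j} = \sum_j \overline{O'_j}$ as cycles. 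Because the polytopes $P_{i_1},\ldots,P_{i_s}$ have pairwise disjoint interiors, the irreducible components on the two sides can be matched by their moment polytopes, giving $\overline{O_j} = \overline{O'_j}$ componentwise. Each such orbit closure contains a unique open dense $(\C^{\ast})^n$-orbit, forcing $O_j = O'_j$, hence $x_j = x'_j$ in $F_{i_j}$.

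The main obstacle is to confirm cleanly that the correspondence between $(\C^{\ast})^n$-orbit closures with a prescribed admissible polytope $P_{i_j}$ and points of $F_{i_j}$ is truly one-to-one inside the single stratum $W_{i_j}$: that every such orbit closure has its generic orbit in $W_{i_j}$ (and not in some deeper stratum with incidental polytope $P_{i_j}$), and that distinct orbits in $W_{i_j}$ produce distinct closures. The first follows from the fact that an admissible polytope records exactly the set of non-vanishing Pl\"ucker coordinates, and hence determines the stratum containing the generic orbit; the second is a standard consequence of the open orbit being dense in its closure. Once this dictionary is in hand, the bijection assembles directly from Proposition~\ref{norost}; no further analysis of the Chow topology is needed, since the statement of Proposition~\ref{bij} asserts only a bijection of sets.
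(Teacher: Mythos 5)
Your construction of $\Phi_i$ is exactly the map $g_i$ that the paper writes down, so the approach is the same; the paper simply defines $g_i(c_{i_1},\ldots,c_{i_s}) = Z_{i_1}(c_{i_1})+\cdots+Z_{i_s}(c_{i_s})$ and declares it a bijection, whereas you also supply the verification (surjectivity from Proposition~\ref{norost}, injectivity from the pairwise-disjoint interiors of the $P_{i_j}$ matching components and each orbit closure having a unique dense orbit), which the paper leaves implicit. Your argument is correct and fills in details the paper omits.
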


\begin{proof}
We define the bijection $g_i : \mathcal{F}_{i}\to \mathcal{C}_{i}$ as follows:
\[
g_{i}(c_{i_1}, \ldots, c_{i_s})= Z_{i_1}(c_1)+\ldots + Z_{i_s}(c_{i_s}), 
\]
where $Z_{i_j}(c_{i_j})$ is the $(\C ^{\ast})^{n}$-orbit from the stratum $W_{i_j}$ which is determined by the parameter $c_{i_j}$.
 \end{proof}

The   Chow quotient $G_{n,2}\!/\!/(\C ^{\ast})^{n}$  and the  complement of $F_{n}$ in  $G_{n,2}\!/\!/(\C ^{\ast})^{n}$ can be interpreted in the following way as well.

Let $P_{\sigma}$ is an admissible polytope and consider the set $\mathcal{P}_{\sigma} = \{\mathcal{P}_{\sigma, 1}, \ldots , \mathcal{P}_{\sigma, s}\}\subset \mathcal{P}$ which consists of  all  decompositions $\mathcal{P}_{\sigma, i}$ of $\Delta _{n,2}$ which contain $P_{\sigma}$,  that is $\mathcal{P}_{\sigma, i}\in \mathcal{P}$ if and only if  $P_{\sigma} \in \mathcal{P}_{\sigma, i}$.

Let $\tilde{Z} _{\sigma, i}\subset  G_{n,2}\!/\!/(\C ^{\ast})^{n}$ be the family of algebraic cycles determined by the decompositions 
$\mathcal{P}_{\sigma, i} = \{P_{\sigma_{i_1}}, \ldots, P_{\sigma _{i_q}}\}$. These cycles are,  by Proposition~\ref{bij},  of the form
\[
Z_{\sigma _{i_1}}(c_{\sigma _{i_1}})+\ldots + Z_{\sigma, _{i_q}}(c_{\sigma _{i_q}}),
\]
for $(c_{\sigma _{i_1}}, \ldots , c_{\sigma _{i_q}}) \in F_{\sigma _{i_1}}\times \cdots \times F_{\sigma _{i_q}}$, where 
$Z_{\sigma _{i_j}}(c_{\sigma _{i_j}})$ is the closure of the algebraic torus orbit in $W_{\sigma _{i_j}}$ and this orbit is determined by $c_{\sigma _{i_j}}\in F_{\sigma _{i_j}} = W_{\sigma _{i,j}}/(\C ^{\ast})^{n}$. 
Let further
\[
\tilde{Z}_{\sigma} = \bigcup\limits_{i=1}^{s}\tilde{Z}_{\sigma, i}.
\]
\begin{prop}\label{projection}
There exists the projection $p_{\sigma} : \tilde{Z} _{\sigma} \to F_{\sigma}$ for any admissible set $\sigma$.
\end{prop}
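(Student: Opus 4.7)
The plan is to construct $p_\sigma$ directly by extracting, from each cycle $Z \in \tilde{Z}_\sigma$, the parameter corresponding to the unique $W_\sigma$-orbit appearing in its decomposition. By definition $Z$ lies in some $\tilde{Z}_{\sigma,i}$ with $P_\sigma \in \mathcal{P}_{\sigma,i}$, so by Proposition~\ref{bij} it has the form
\[
Z \;=\; Z_\sigma(c_\sigma) \;+\; \sum_{\tau \in \mathcal{P}_{\sigma,i} \setminus \{P_\sigma\}} Z_\tau(c_\tau),
\]
with a distinguished summand whose matroid polytope is $P_\sigma$. I would set $p_\sigma(Z) := c_\sigma \in F_\sigma$.

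Well-definedness requires that this value be independent of the particular decomposition $\mathcal{P}_{\sigma,i}$ to which one views $Z$ as belonging. Because an algebraic cycle admits a unique presentation as a sum of irreducible $(\C^{\ast})^{n}$-orbit closures, and distinct strata $W_\tau$ correspond to distinct admissible polytopes via the moment map, the summand supported over $W_\sigma$ is intrinsic to $Z$ and independent of how one groups it with the remaining components. Surjectivity is immediate: for any $c_\sigma \in F_\sigma$, fix any polyhedral decomposition of $\Delta_{n,2}$ containing $P_\sigma$ (which exists since $P_\sigma$ is admissible) and assign arbitrary parameters to the remaining polytopes; the resulting cycle lies in $\tilde{Z}_\sigma$ and maps to $c_\sigma$.

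The continuity of $p_\sigma$ is the main point and the principal obstacle. On each piece $\tilde{Z}_{\sigma,i}$, the bijection $g_i$ from Proposition~\ref{bij} identifies this piece with $\mathcal{F}_{\sigma,i} = F_{\sigma_{i_1}} \times \cdots \times F_{\sigma_{i_q}}$, and $p_\sigma$ restricts to the projection onto the $F_\sigma$-factor, which is continuous. Since by Proposition~\ref{norost} the Chow quotient is the disjoint union of the components $\mathcal{C}_{i}$, the subsets $\tilde{Z}_{\sigma,i}$ are open and closed in $\tilde{Z}_\sigma$, so continuity on each piece assembles into global continuity of $p_\sigma$. The step that requires genuine justification is that $g_i$ is a homeomorphism, not merely a set-theoretic bijection; this reduces to standard properties of the Chow form construction recalled earlier from~\cite{GKZ} and~\cite{Kap}, where the assignment sending a tuple of parameters to the Chow form of the corresponding sum of orbit closures is shown to be a morphism of algebraic varieties and hence continuous in the Chow topology.
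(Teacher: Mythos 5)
Your construction of $p_\sigma$ coincides with the paper's: for $Z\in\tilde{Z}_\sigma$ one takes the unique irreducible summand $Z_\sigma$ whose admissible polytope is $P_\sigma$, i.e.\ the closure of a $(\C^{\ast})^{n}$-orbit in $W_\sigma$, and sets $p_\sigma(Z)=q_\sigma(Z_\sigma)$ where $q_\sigma\colon W_\sigma\to F_\sigma=W_\sigma/(\C^{\ast})^{n}$ is the quotient map; this is exactly your $c_\sigma$. The paper's proof consists of precisely this definition and stops there, leaving well-definedness and surjectivity implicit (as you correctly observe, they follow from uniqueness of the decomposition of a cycle into irreducible orbit closures and from the existence of some polyhedral decomposition containing $P_\sigma$).

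The one place where you go beyond the paper and run into trouble is the continuity argument. You claim that, because Proposition~\ref{norost} presents $G_{n,2}\!/\!/(\C^{\ast})^{n}$ as a disjoint union of the $\mathcal{C}_i$, each $\tilde{Z}_{\sigma,i}$ is open and closed in $\tilde{Z}_\sigma$. That is false: the Chow quotient is an irreducible projective variety compactifying $F_n$, so the decomposition into the $\mathcal{C}_i$ is set-theoretic, not a decomposition into clopen pieces. Already for $n=5$ one sees this explicitly in the paper's computation of $\tilde{Z}_{ij,7}\cong\C P^1$, which is $\mathcal{C}_{ij}\cong\C P^1_{A}$ together with the three boundary points $\mathcal{C}_{ij,kl}$; the latter are in the closure of $\mathcal{C}_{ij}$, hence certainly not open. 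So continuity cannot be assembled piecewise in the way you propose, and showing that $p_\sigma$ is continuous across the boundary strata would require a genuine limit analysis in the Chow topology. The paper avoids this issue by asserting only the existence of the map; your core construction is correct and identical, but the supplementary continuity argument should be deleted or replaced.
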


\begin{proof} For an algebraic cycle $Z\in \tilde{Z}_{\sigma}$ there exists a $(\C ^{\ast})^{n}$- orbit $Z_{\sigma}$ from the stratum $W_{\sigma}$ whose admissible polytopes is $P_{\sigma}$ and which is an  irreducible summand of $Z$. Since $F_{\sigma} = W_{\sigma}/(\C ^{\ast})^{n}$ there is the canonical $(\C ^{\ast})^{n}$-invariant  projection $q_{\sigma}: W_{\sigma}\to F_{\sigma}$.  We define  $p_{\sigma}(Z) = q_{\sigma}(Z_{\sigma})$.
\end{proof}

Recall~\cite{GM}, ~\cite{BT-n2} that the admissible polytopes define the chamber decomposition for $\Delta_{n,2}$: for some subset $\omega$  of all admissible sets, $C_{\omega}$ is a chamber if 
\[
C_{\omega} = \bigcap\limits_{\sigma \in \omega} \stackrel{\circ}{P}_{\sigma}, \;\; C_{\omega} \cap \stackrel{\circ}{P}_{\sigma}=\emptyset.
\]



\begin{thm}\label{union-chamber}
Let $C_{\omega}\subset \Delta _{n,2}$ be a chamber such that $\dim C_{\omega} = n-1$. Then  $C_{\omega}$ defines the decomposition of  $G_{n,2}\!/\!/(\C ^{\ast})^{n}$ into disjoint union, that is:
\begin{equation}\label{union}
\bigcup\limits_{\sigma \in \omega} \tilde{Z}_{\sigma} = G_{n,2}\!/\!/(\C ^{\ast})^{n}.
\end{equation}
\end{thm}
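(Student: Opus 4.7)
The plan is to establish both the covering property and the disjointness separately, exploiting the fact that $C_\omega$ is full-dimensional so that it selects a single polytope from any polyhedral decomposition of $\Delta_{n,2}$.

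First I would prove disjointness. Suppose $\sigma, \sigma' \in \omega$ are distinct admissible sets. By the very definition of the chamber, $C_\omega \subset \stackrel{\circ}{P}_\sigma \cap \stackrel{\circ}{P}_{\sigma'}$, so the interiors of $P_\sigma$ and $P_{\sigma'}$ share a nonempty (in fact full-dimensional) open subset. In any polyhedral decomposition $\mathcal{P}_i$ of $\Delta_{n,2}$, however, the interiors of distinct members are disjoint, so no single $\mathcal{P}_i$ can contain both $P_\sigma$ and $P_{\sigma'}$. Since $\tilde{Z}_\sigma = \bigcup_{\mathcal{P}_i \ni P_\sigma} \mathcal{C}_i$ (and likewise for $\sigma'$), and the components $\mathcal{C}_i$ are pairwise disjoint in $G_{n,2}\!/\!/(\C^{\ast})^n$, we conclude $\tilde{Z}_\sigma \cap \tilde{Z}_{\sigma'} = \emptyset$.

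Next I would prove the union equals the whole Chow quotient. Since every $\tilde{Z}_\sigma$ is by construction a subset of $G_{n,2}\!/\!/(\C^{\ast})^n$, only the inclusion $\supseteq$ requires argument. By Proposition~\ref{norost}, it suffices to show that every connected component $\mathcal{C}_i$, corresponding to a polyhedral decomposition $\mathcal{P}_i = \{P_{i_1}, \ldots, P_{i_s}\}$, is contained in some $\tilde{Z}_\sigma$ with $\sigma \in \omega$. Pick any point $x \in C_\omega$. Since $\dim C_\omega = n-1$ and the $(n-1)$-dimensional polytopes of $\mathcal{P}_i$ tile $\Delta_{n,2}$ with pairwise disjoint interiors, $x$ lies in the interior of a unique polytope $P_{i_j} \in \mathcal{P}_i$. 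The whole chamber $C_\omega$, being connected and disjoint from the walls of every admissible polytope, must then lie in $\stackrel{\circ}{P}_{i_j}$. By definition of $\omega$, this means the corresponding admissible set $\sigma_{i_j}$ belongs to $\omega$, and hence $\mathcal{C}_i \subset \tilde{Z}_{\sigma_{i_j}}$.

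Combining these two steps yields the desired disjoint union decomposition~\eqref{union}. The main subtlety I expect is the statement that $C_\omega$ lies in the interior of exactly one polytope of any decomposition $\mathcal{P}_i$: this uses essentially that $\dim C_\omega = n-1 = \dim \Delta_{n,2}$, so that $C_\omega$ cannot be hidden in the walls of the polyhedral decomposition, together with the definition of chambers via intersections of open interiors of admissible polytopes. Once that point is secured, both halves of the argument reduce to bookkeeping about which decompositions each $P_\sigma$ participates in.
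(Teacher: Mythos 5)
Your proof is correct and follows essentially the same route as the paper's: for the covering you show that any decomposition $\mathcal{P}_i$ must contain a polytope whose interior contains $C_\omega$ and hence corresponds to some $\sigma \in \omega$, and for disjointness you show that two distinct $\sigma,\sigma'\in\omega$ have intersecting interiors so cannot both appear in one decomposition. The one small difference is that you make explicit where the hypothesis $\dim C_\omega = n-1$ enters (a point of $C_\omega$ cannot be hidden in the walls of the decomposition), a step the paper passes over more tersely when concluding that some $P_{\sigma_{i_j}}$ in the decomposition satisfies $C_\omega \subset \stackrel{\circ}{P}_{\sigma_{i_j}}$.
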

\begin{proof}
Let $Z\in G_{n,2}\!/\!/(\C ^{\ast})^{n}$. Then $Z$ is determined by some decomposition $\mathcal{P}_{i} = \{ P_{\sigma_{i_1}}, \ldots , P_{\sigma _{i_l}}\}$. Note that for any admissible polytope $P_{\sigma}$ it holds $C_{\omega}\subset \stackrel{\circ}{P}_{\sigma}$ either $C_{\omega}\cap \stackrel{\circ}{P}_{\sigma} = \emptyset$. Thus, there exists $P_{\sigma _{i_{j}}}\in \mathcal{P}_{i}$ such that $C_{\omega}\subset \stackrel{\circ}{P}_{\sigma _{i_{j}}}$ meaning that $\sigma_{i_{j}}\in \omega$.  Therefore,   $Z\in \tilde{Z}_{\sigma _{i_{j}}}$, which implies that $Z\in \cup _{\sigma \in \omega} \tilde{Z}_{\sigma}$.

In order to prove that the union~\eqref{union} is disjoint, we note that  for   $\sigma _1, \sigma _2\in \omega$ we have that  $C_{\omega}\subset \stackrel{\circ}{P}_{\sigma _1}, \stackrel{\circ}{P}_{\sigma _2}$, that is $\stackrel{\circ}{P}_{\sigma _1}\cap \stackrel{\circ}{P}_{\sigma _2}\neq \emptyset.$ It implies that there is no decomposition for $\Delta _{n,2}$ which contains both $P_{\sigma _1}$ and $P_{\sigma _2}$. Thus, there is no algebraic cycle 
$Z\in G_{n,2}\!/\!/(\C ^{\ast})^{n}$ such that $Z\in \tilde{Z} _{\sigma _1}$ and $Z\in \tilde{Z}_{\sigma _2}$.
\end{proof}

 Note that $\tilde{Z}_{\sigma} = F_n$ for $P_{\sigma}= \Delta_{n,2}$, which implies that~\eqref{union} describes as well  the build up components  to $F_{n}$ in $ G_{n,2}\!/\!/(\C ^{\ast})^{n}$,  which are in general no more disjoint.

\begin{rem}
In our papers~\cite{BT-2},~\cite{BT-2nk}, for the purpose of  description of an orbit space $G_{n,2}/T^n$,  we introduced the notion of virtual space of parameters $\tilde{F}_{\sigma}$ for a stratum $W_{\sigma}$. Note that the properties of the spaces $\tilde{Z}_{\sigma}$ formulated  by Proposition~\ref{projection} and Theorem~\ref{union-chamber}  confirm that the spaces $\tilde{Z}_{\sigma}$ correspond to the spaces $\tilde{F}_{\sigma}$  for which $\dim P_{\sigma}=n-1$.   In particular,  Theorem 7 from~\cite{BT-n2}
is an analogue of Theorem~\ref{union-chamber}.
\end{rem}

Theorem~\ref{modul} and the result of~\cite{Kap} that the  manifolds $G_{n,2}\!/\!/(\C ^{\ast})^{n}$  and $\overline{\mathcal{M}}(0,n)$ are isomorphic imply  that the  universal space of parameters $\mathcal{F}_{n}$  describe the topology of the gluing of  the build up components in $G_{n,2}\!/\!/ (\C ^{\ast})^{n}$.  Having the description of $\mathcal{F}_{n}$ as the wonderful compactification of $\bar{F}_{n}$  we give an explicit demonstration of this correspondence  in the cases 
$n=4$ and $n=5$.

\subsection{ $G_{4,2}\!/\!/(\C ^{\ast})^{4}$ and $G_{4,2}/T^4$ }

For  $n=4$ the build up components in $G_{4,2}\!/\!/ (\C ^{\ast})^{4}$  consist of three points  and they glue  together with  $F_{4}\cong \C P^{1}_{A}$  in  $G_{4,2}\!/\!/ (\C ^{\ast})^{4}$ to give $\C P^{1}$.  This  is observed in~\cite{Kap}, but also independently follows from~\cite{BT-1} and Theorem~\ref{modul}. Using our notation, we describe this in the following way.  There are exactly three decompositions of an  octahedron $\Delta _{4,2}$ that is $\mathcal{P}= \{\mathcal{P}_{1}, \mathcal{P}_{2}, \mathcal{P}_{3}\}$  and they are given by the three pairs of four-sided complementary pyramids.  The space of parameters of a stratum  for any of these pyramid is a point. Then Proposition~\ref{bij}  implies  that $(G_{4,2}\!/\!/ (\C ^{\ast})^{4})\setminus F_{4}$ consists of three points, that is $G_{4,2}\!/\!/ (\C ^{\ast})^{4}\cong \C P^1$.  More precisely,  these three points  correspond to the algebraic cycles formed of the $(\C ^{\ast})^{4}$-orbits whose admissible polytopes are complementary pyramids in the octahedron $\Delta _{4,2}$.  In addition, for any pyramid $P_{\sigma}$ we have that $\tilde{Z}_{\sigma}$ is given by the  one algebraic cycle, which implies that $F_{\sigma} = \tilde{Z}_{\sigma}$.

\subsection{   $G_{5,2}\!/\!/(\C ^{\ast})^{5}$ and $G_{5,2}/T^5$}
In the case $n=5$,  we use the results  from~\cite{BT-2}  to formulate the correspondence between $\mathcal{F}_{5}$ and   the Chow quotient $G_{5,2}\!/\!/ (\C ^{\ast})^{5}$.  First,  we immediately have the following:

\begin{lem}
There are $25$ decompositions of the hypersimplex $\Delta_{5,2}$ given by the admissible polytopes for $T^5$-action on $G_{5,2}$.
There are given by the pairs $\{K_{ij}, P_{ij}\}$, $1\leq i<j\leq 5$ and the triples $\{P_{ij}, K_{ij, kl}, P_{kl}\}$., $1\leq i<j\leq 5$, $1\leq k<l\leq5$, $\{i,j\}\cap \{k,l\}=\emptyset$. Here $K_{ij}$ is a polytope with $9$ vertices which does not contain the vertex $\Lambda _{ij}$, then $P_{ij}$ is seven-sided pyramid with the apex $\Lambda _{ij}$, while $K_{ij, kl}$ is a polytope with $8$ vertices which does not contain the vertices $\Lambda _{ij}$ and $\Lambda _{kl}$.
\end{lem}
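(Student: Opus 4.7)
The plan is to enumerate the polyhedral decompositions of $\Delta_{5,2}$ into top-dimensional admissible polytopes by a direct combinatorial analysis, relying on the classification of such polytopes established in our earlier work~\cite{BT-2}. Recall that the $4$-dimensional admissible polytopes inside $\Delta_{5,2}$ are precisely $\Delta_{5,2}$ itself, the ten corner pyramids $P_{ij}$, the ten complements $K_{ij}=\overline{\Delta_{5,2}\setminus P_{ij}}$, and the fifteen bodies $K_{ij,kl}=\overline{\Delta_{5,2}\setminus(P_{ij}\cup P_{kl})}$ defined for disjoint pairs. This structural input fixes the ``building blocks'' and reduces the problem to enumeration.

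First I would verify that each pyramid $P_{ij}$ cuts $\Delta_{5,2}$ cleanly. Its base is the convex hull of the six vertices $\Lambda_{ab}$ with $|\{a,b\}\cap\{i,j\}|=1$ (a triangular prism, the link of $\Lambda_{ij}$ in $\Delta_{5,2}$), and the affine hyperplane spanned by this base separates the apex $\Lambda_{ij}$ from the three remaining vertices $\Lambda_{ab}$ with $\{a,b\}\cap\{i,j\}=\emptyset$. Hence $\{P_{ij},K_{ij}\}$ is a valid polyhedral decomposition for each of the $\binom{5}{2}=10$ choices of $\{i,j\}$, accounting for the ten pairs in the statement.

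Next I would analyze when two pyramids $P_{ij}$ and $P_{kl}$ can appear together in a decomposition. If $\{i,j\}\cap\{k,l\}=\emptyset$, then their bases meet exactly along the $2$-dimensional quadrilateral spanned by $\{\Lambda_{ik},\Lambda_{il},\Lambda_{jk},\Lambda_{jl}\}$, so the interiors are disjoint; the residual region $K_{ij,kl}$ is admissible and $\{P_{ij},K_{ij,kl},P_{kl}\}$ is a valid decomposition. If instead $|\{i,j\}\cap\{k,l\}|=1$, then the apex $\Lambda_{ij}$ lies in the base of $P_{kl}$ and vice versa, and a short check (writing down common convex combinations of $\Lambda_{ij},\Lambda_{kl}$ with shared base vertices) shows that the interiors of $P_{ij}$ and $P_{kl}$ overlap in a full $4$-dimensional set, ruling out any joint decomposition. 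One also cannot cut off three pyramids simultaneously since three pairwise disjoint two-element subsets of $\{1,\dots,5\}$ would require $6>5$ indices. This yields exactly $\binom{5}{2}\binom{3}{2}/2=15$ triple decompositions.

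Finally I would rule out any further decomposition. Each vertex $\Lambda_{ij}$ of $\Delta_{5,2}$ must lie in some piece of the decomposition, and among the admissible polytopes only $\Delta_{5,2}$ itself and $P_{ij}$ actually contain $\Lambda_{ij}$; in a non-trivial decomposition $\Delta_{5,2}$ does not appear, so the piece at each ``active'' vertex must be $P_{ij}$, and the remaining region is forced to be $K_{ij}$ or $K_{ij,kl}$. Summing gives the $10+15=25$ decompositions listed. The main obstacle is precisely this last completeness step, for which one must have the full list of $4$-dimensional admissible polytopes from~\cite{BT-2} available, so as to ensure that no exotic admissible polytope can slip into a hypothetical further decomposition.
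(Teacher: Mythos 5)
The paper itself treats this lemma as immediate from the classification of admissible polytopes in~\cite{BT-2} and gives no argument, so there is no ``paper proof'' to match; your attempt to supply the underlying combinatorics is the right instinct, and your inventory of the $4$-dimensional admissible polytopes, the existence of the $10$ pair-decompositions and the $15$ triple-decompositions, and the exclusion of two pyramids with overlapping index pairs are all sound.

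The completeness step, however, rests on a false claim. You assert that among the admissible polytopes only $\Delta_{5,2}$ and $P_{ij}$ contain the vertex $\Lambda_{ij}$, and from this conclude that the piece at each ``cut'' vertex must be $P_{ij}$. But $K_{kl}$ is (by your own description) the closure of $\Delta_{5,2}\setminus P_{kl}$ and hence has all $9$ vertices other than $\Lambda_{kl}$; in particular it contains $\Lambda_{ij}$ whenever $\{i,j\}\neq\{k,l\}$, and when $\{i,j\}\cap\{k,l\}=\emptyset$ it even contains a full $4$-dimensional neighborhood of $\Lambda_{ij}$ in $\Delta_{5,2}$. The same is true of $K_{kl,mn}$ with $\{k,l\},\{m,n\}\neq\{i,j\}$. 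So nothing forces the piece at a given vertex to be a pyramid, and the notion of ``active vertex'' is never pinned down. The argument should instead be run from the other side: in any nontrivial decomposition every piece is one of $P_{ab}$, $K_{ab}$, $K_{ab,cd}$; if some piece is $K_{ab}$, the remaining pieces must tile $P_{ab}$, and since $P_{ab}$ has only $7$ vertices while $K$-type polytopes have $8$ or $9$, and no $P_{cd}\neq P_{ab}$ sits inside $P_{ab}$ (two distinct $7$-vertex pyramids cannot share a vertex set), the only possibility is the single piece $P_{ab}$; likewise if some piece is $K_{ab,cd}$ the rest must tile $P_{ab}\cup P_{cd}$, and since each remaining piece has connected interior disjoint from $\mathrm{int}\,K_{ab,cd}$ and $P_{ab},P_{cd}$ meet only in a $2$-face, the rest is exactly $\{P_{ab},P_{cd}\}$; finally a decomposition consisting solely of pyramids is impossible because at most two can have disjoint interiors and two pyramids do not cover $\Delta_{5,2}$. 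This recovers the $10+15=25$ count without the incorrect vertex-containment claim.
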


The space of parameters for  a  admissible polytope $K_{ij}$  is $\C P^{1}_{A}$, while  for the polytopes $P_{ij}$ and $K_{ij, kl}$ it is a point. Then Proposition~\ref{bij} gives:

\begin{cor}
The disjoint build up components for   $F_5$ in $G_{5,2}\!/\!/ (\C ^{\ast})^{5}$ are $\mathcal{C}_{ij} \cong \C P^{1}_{A}$ and the points  $\mathcal{C}_{ij, kl}$ for $1\leq i<j\leq 5$, $1\leq k<l\leq 5$.  A component $\mathcal{C}_{ij}$   consists of the cycles of the form $Z_{ij,9}(c)+Z_{ij,7}$  while a component $\mathcal{C}_{ij,kl}$ consists of the cycle $Z_{ij,7}+Z_{ij, kl} + Z_{kl,7}$, where $c\in \C P^{1}_{A}$. The  irreducible algebraic cycles here  are as follows:
\begin{itemize}
\item  $Z_{ij,9}(c)$  is the closure of a orbit from the stratum whose admissible polytope is $K_{ij}$;
\item $Z_{ij, kl}$    is the closure of the orbit whose admissible polytope  $K_{ij, kl}$;
\item $Z_{ij,7}$ it the closure  of the orbit   whose admissible polytope  $P_{ij}$.
\end{itemize}
\end{cor}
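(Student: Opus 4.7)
The plan is to combine the preceding lemma, which enumerates the $25$ non-trivial polyhedral decompositions of $\Delta_{5,2}$ into admissible polytopes, with Proposition~\ref{bij} applied to each decomposition separately. The input data beyond these two statements are the spaces of parameters of the individual admissible strata for $G_{5,2}$, which are recorded in~\cite{BT-2}: one has $F_{K_{ij}}\cong \C P^{1}_{A}$, while $F_{P_{ij}}$ and $F_{K_{ij,kl}}$ are single points.

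First I would treat the $10$ pair-decompositions $\mathcal{P}_{ij} = \{K_{ij}, P_{ij}\}$. For each such $\mathcal{P}_{ij}$, the definition of the space of parameters of a family as the product of spaces of parameters of its constituents gives
\[
\mathcal{F}_{ij} = F_{K_{ij}}\times F_{P_{ij}} \cong \C P^{1}_{A}\times \{\mathrm{pt}\}\cong \C P^{1}_{A},
\]
and Proposition~\ref{bij} transports this to $\mathcal{C}_{ij}\cong \C P^{1}_{A}$. Analogously, for each of the $15$ triple-decompositions $\mathcal{P}_{ij,kl} = \{P_{ij}, K_{ij,kl}, P_{kl}\}$ all three factors are points, so $\mathcal{F}_{ij,kl}$ is a single point and $\mathcal{C}_{ij,kl}$ is a point.

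Next I would unwind the bijection $g_{i}$ of Proposition~\ref{bij} to identify the algebraic cycles explicitly. By its definition, $g_{i}$ sends a tuple of parameters to the sum of closures of the $(\C^{\ast})^{n}$-orbits in the corresponding strata determined by those parameters. Hence the cycle in $\mathcal{C}_{ij}$ indexed by $c\in \C P^{1}_{A}$ is precisely $Z_{ij,9}(c)+Z_{ij,7}$, the parameter $c$ selecting an orbit in $W_{K_{ij}}$ while the one-point factor forces the second summand to be the unique orbit closure $Z_{ij,7}$; the same unwinding for the triples produces the cycle $Z_{ij,7}+Z_{ij,kl}+Z_{kl,7}$. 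Disjointness of the various $\mathcal{C}_{i}$ from each other, and from $F_{5}$, follows from Proposition~\ref{norost}, since each algebraic cycle in $G_{n,2}\!/\!/(\C^{\ast})^{n}$ uniquely determines the polyhedral decomposition of $\Delta_{n,2}$ formed by the matroid polytopes of its irreducible components.

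There is no real obstacle: the statement is a direct assembly of the enumeration lemma, Proposition~\ref{bij}, and the tabulation of spaces of parameters from~\cite{BT-2}. The only point that requires care is keeping the indexing conventions straight, so that the roles of $P_{ij}$, $K_{ij}$, and $K_{ij,kl}$ in the cycles match the admissible polytopes appearing in the corresponding decomposition of $\Delta_{5,2}$.
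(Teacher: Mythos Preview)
Your proposal is correct and follows essentially the same approach as the paper: the paper simply records that the spaces of parameters for $K_{ij}$, $P_{ij}$, and $K_{ij,kl}$ are $\C P^{1}_{A}$, a point, and a point respectively, and then says ``Then Proposition~\ref{bij} gives'' the corollary. Your write-up is in fact more detailed than the paper's, spelling out the product computation of $\mathcal{F}_{i}$ and the unwinding of the bijection $g_{i}$, but the logical route is identical.
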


We proved in~\cite{BT-2} that the universal space of parameters $\mathcal{F}_{5}$  is the blow up   of the surface $\bar{F}_{5} = \{(c_1:c_1^{'}), (c_2:c_2^{'}), (c_3:c_3^{'}))\in (\C P^{1})^{3}, \; c_1c_{2}^{'}c_3=c_{1}^{'}c_2c_{3}^{'}\}$ at the point $((1:1), (1:1), (1:1))$.  The identification of $\mathcal{F}_{5}$ with $G_{5,2}\!/\!/ (\C ^{\ast})^{5}$ translates to the gluing of the build up components in $G_{5,2}\!/\!/ (\C ^{\ast})^{5}$  as follows:

\begin{cor}
The gluing of the build up components  in  $G_{5,2}\!/\!/ (\C ^{\ast})^{5}$  corresponds  to the compactification of $F_5\subset \mathcal{F}_{5}$ by the following pattern:
\begin{itemize}
\item the cycles $Z_{ij}(c) =  Z_{ij,9}(c) + Z_{ij,7}\in \mathcal{C}_{ij}$ correspond to the subvarieties  in $ \mathcal{F}_{5}$  as follows:
\begin{itemize}
\item  $Z_{23}(c)  =   ((0:1), (0:1),  (c:c^{'})) $,
\item  $Z_{24}(c) =   ((1:0), (c:c^{'}) , (0:1))$,
\item  $Z_{25}(c)  = ((c:c^{'}) , (1:0), (1:0))$,
\item $Z_{13}(c) = ((1:0), (1:0), (c:c^{'}))$,
\item $Z_{14}(c) =   ((0:1),  (c:c^{'}), (1:0))$,
\item $Z_{15}(c) =  ((c:c^{'}),  (0:1), (0:1))$,
\item $Z_{34}(c) =  ((1:1),(c:c^{'}), (c:c^{'}))$,
\item $Z_{35}(c) =  ((c:c^{'}), (1:1), (c^{'}:c))$,
\item $Z_{45}(c) =    ((c:c^{'}), (c:c^{'}), (1:1))$,
\item $Z_{12}(c)$ to the points from $\C P^{1}_{A}$ of  the divisor $\C P^{1}$.
\end{itemize}
\item the cycles $Z_{ij, kl} = Z_{ij,7} + Z_{ij,kl}+Z_{kl,7} = \mathcal{C}_{ij,kl}$ correspond to the points:

\begin{itemize}

\item  $Z_{14,23} =  ((0:1), (0:1), (1:0)), \; Z_{13,24}  =  ((1:0), (1:0), (0:1))$, 
\item $Z_{15,24} = ((1:0), (0:1), (0:1)),\; Z_{23,45} =  ((0:1), (0:1), (1:1))$,
\item $Z_{24,35} = ((1:0), (1:1), (0:1)),\;Z_{25,34} = ((1:1), (1:0), (1:0))$, 
\item $Z_{15, 23} = ((0:1), (0:1), (0:1)),\; Z_{13, 25} = ((1:0), (1:0), (1:0))$,
\item $Z_{14,25} = ((0:1), (1:0), (1:0)),\; Z_{13,45} = ((1:0), (1:0), (1:1))$;
\item  $Z_{14,35} = ((0:1), (1:1), (1:0)),\;  Z_{15,34} =  ((1:1), (0:1), (0:1))$.
\item $Z_{12, 34}$, $Z_{12, 35}$, $Z_{12, 45}$ to the points $(1:0), (0:1), (1:1)$ respectively  of  the divisor $\C P^1$.
 \end{itemize}
\end{itemize}
\end{cor}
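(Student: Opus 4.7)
The plan is to apply Proposition~\ref{bij} together with Theorem~\ref{modul} to translate the cycle description of each build-up component into a description of the corresponding virtual space of parameters $\tilde{F}_{\sigma}\subset \mathcal{F}_{5}$, and then to compute each $\tilde{F}_{\sigma}$ explicitly in the coordinates on $\bar{F}_{5}\subset (\C P^{1})^{3}$ and on its blow-up $\mathcal{F}_{5}$ from Section~\ref{building}. Recall that the three coordinates are $(c_1:c_1')=(c_{34}:c_{34}')$, $(c_2:c_2')=(c_{35}:c_{35}')$, $(c_3:c_3')=(c_{45}:c_{45}')$, with $\bar{F}_{5}$ cut out by $c_1c_2'c_3=c_1'c_2c_3'$ and $\mathcal{F}_{5}$ the blow-up of $\bar{F}_{5}$ at $((1:1),(1:1),(1:1))$.

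First, for each nine-vertex polytope $K_{ij}$ with $\{i,j\}\neq \{1,2\}$ I would note that $W_{ij}\subset M_{12}$, so the stratum is cut out by vanishing of one of the local Pl\"ucker monomials $z_3,w_3,z_4,w_4,z_5,w_5$ or of one of the expressions $z_kw_l-z_lw_k$. Substituting this vanishing into the orbit equations~\eqref{orbit} forces two of the three pairs $(c_{kl}:c_{kl}')$ to equal $(0:1)$ or $(1:0)$, while the third stays free in $\C P^{1}_{A}$. Carrying this out once for each of the nine cases produces exactly the curves listed for $Z_{23}(c),\dots ,Z_{45}(c)$. Similarly, every eight-vertex polytope $K_{ij,kl}$ with $\{i,j\},\{k,l\}$ both disjoint from $\{1,2\}$ lies in $M_{12}$, and imposing two simultaneous Pl\"ucker vanishings gives isolated boundary points of $\bar{F}_{5}$; direct calculation recovers each of the twelve explicit points listed for $Z_{13,24},\dots ,Z_{15,34}$.

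Second, for the polytope $K_{12}$ and for $K_{12,kl}$, the stratum $W_{12}$ does not lie in $M_{12}$, so I would pass to the chart $M_{13}$ and use the transition formulas of Lemma~\ref{jedan} with $j=3$. The condition $P^{12}=0$ becomes $w_2'=0$, and Lemma~\ref{jedan} then forces
\[
(c_{34}:c_{34}')=(c_{35}:c_{35}')=(c_{45}:c_{45}')=(1:1),
\]
i.e.\ $q_{\sigma}(W_{12})$ collapses to the singular point of $\bar{F}_5$ blown up to form $\mathcal{F}_{5}$. Hence $\tilde{F}_{12}$ is contained in the exceptional divisor $\C P^{1}$, and by a dimension count it equals all of $\C P^{1}$ with $\C P^{1}_{A}$ being its generic part; this matches $Z_{12}(c)$. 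The three strata $W_{12,34},W_{12,35},W_{12,45}$, being codimension-one degenerations inside $W_{12}$, must each pick out a single point of this exceptional $\C P^{1}$, and to identify which point I would compute in $M_{13}$ the tangent direction along which $((c_{34}:c_{34}'),(c_{35}:c_{35}'),(c_{45}:c_{45}'))$ approaches $((1:1),(1:1),(1:1))$ as $P^{12}$ and $P^{kl}$ degenerate simultaneously, reading off the slopes $(1:0)$, $(0:1)$, $(1:1)$ for $kl=34,35,45$ respectively.

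The main obstacle is precisely this last identification of the three isolated points on the exceptional divisor with $Z_{12,34}, Z_{12,35}, Z_{12,45}$: the computation requires controlling the second-order behaviour of the orbit equations under the transition map of Lemma~\ref{jedan}, rather than just the leading-order vanishings that sufficed in the first two steps. Once this is done, consistency with the gluing prescribed by Theorem~\ref{wcom} and the bijection of Proposition~\ref{bij} assembles all the separate computations into the asserted correspondence, and the identification of $\mathcal{F}_{5}$ with $G_{5,2}\!/\!/(\C^{\ast})^{5}$ from Theorem~\ref{modul} promotes it from a set-theoretic correspondence to the required structural one.
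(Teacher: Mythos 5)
Your high-level approach is the natural one and is sound: reduce each cycle to its parameter-space image in $\bar{F}_5$ by substituting the relevant Pl\"ucker vanishing into the orbit equations~\eqref{orbit}, and for the strata not contained in $M_{12}$ transport the answer through the transition homeomorphisms of Theorem~\ref{wcom}. The computations you indicate do reproduce the listed points for the strata lying in $M_{12}$; for instance, for $K_{14,23}$ the vanishing $w_4 = z_3 = 0$ forces $(c_{34}:c_{34}') = (z_3w_4:z_4w_3) = (0:1)$, $(c_{35}:c_{35}')=(0:1)$, $(c_{45}:c_{45}')=(1:0)$, agreeing with $Z_{14,23}$.

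However, two descriptions are inaccurate and one step is a genuine gap. For $K_{34}$, $K_{35}$, $K_{45}$ the vanishing of $P^{ij}$ does \emph{not} send two coordinates to $(0:1)$ or $(1:0)$: it forces $(c_{ij}:c_{ij}')=(1:1)$ and makes the remaining two coordinates free but mutually determined (a diagonal for $K_{34},K_{45}$ and an anti-diagonal for $K_{35}$, as the listed $Z_{34},Z_{35},Z_{45}$ show). The phrase ``$\{i,j\},\{k,l\}$ both disjoint from $\{1,2\}$'' also cannot be what you mean, since two disjoint pairs inside $\{3,4,5\}$ do not exist; the relevant condition is only that neither pair equals $\{1,2\}$, so that $W_{ij,kl}\subset M_{12}$. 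Most importantly, the identification of $Z_{12,34}, Z_{12,35}, Z_{12,45}$ with the points $(1:0),(0:1),(1:1)$ on the exceptional $\C P^1$ is asserted but not derived, and this is precisely the part that requires controlling the blow-up. Concretely, in the $M_{13}$-record these three strata sit at $((1:0),(1:0),(d:d'))$ with $(d:d')=(0:1),(1:0),(1:1)$, obtained from $w_2'=0$ together with $z_4'=0$, $z_5'=0$, $z_4'w_5'=z_5'w_4'$ respectively; one must then expand $(d_{45}:d_{45}')=(c_{35}(c_{34}-c_{34}'):c_{34}(c_{35}-c_{35}'))$ from Lemma~\ref{jedan} near $((1:1),(1:1),(1:1))$ on $\bar F_5$ to see that $(d:d')$ becomes a coordinate on the projectivized tangent space, and check that it matches the chosen parametrization of the exceptional divisor. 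Until that computation is written out, the last three entries of the list are unverified.
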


We also directly  establish the correspondence  of the subspaces $\tilde{Z}_{ij,9}, \tilde{Z}_{ij, 7}$ and $\tilde{Z}_{ij,kl}$ with the subspaces in $\mathcal{F}_{5}$.

\begin{cor}
The spaces $\tilde{Z}_{ij,9}$, $\tilde{Z}_{ij, 7}, \tilde{Z}_{ij,kl} \subset G_{5,2}\!/\!/ (\C ^{\ast})^{5}$ are homeomorphic to  $\C P^{1}_{A}$, $\C P^{1}$ and a point respectively. The  corresponding spaces  in $\mathcal{F}_{5}$ are 
\[
\tilde{Z}_{ij, 9} = \mathcal{C}_{ij}, \;\; \tilde{Z}_{ij,kl}= \mathcal{C}_{ij, kl},
\]
 \[
\tilde{Z}_{ij, 7} = \mathcal{C}_{ij} \cup  (\cup _{k, l\in \{1, \ldots ,5\}\setminus \{i,j\}}\mathcal{C}_{ij, kl}).
\]
\end{cor}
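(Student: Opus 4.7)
The plan is to read off each $\tilde{Z}_\sigma$ directly from its definition, combined with the enumeration of decompositions given in the preceding Lemma and the bijection $\mathcal{F}_i \leftrightarrow \mathcal{C}_i$ of Proposition~\ref{bij}. Recall that $\tilde{Z}_\sigma = \bigcup_i \tilde{Z}_{\sigma,i}$, where the union is taken over all decompositions $\mathcal{P}_{\sigma,i}$ of $\Delta_{5,2}$ containing $P_\sigma$, and the homeomorphism type of each $\tilde{Z}_{\sigma,i}$ is the product of the spaces of parameters of its constituent admissible polytopes.

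First I would handle $\tilde{Z}_{ij,9}$ (corresponding to $K_{ij}$) and $\tilde{Z}_{ij,kl}$ (corresponding to $K_{ij,kl}$). The Lemma tells us that the only decomposition of $\Delta_{5,2}$ containing $K_{ij}$ is the pair $\{K_{ij}, P_{ij}\}$, and the only one containing $K_{ij,kl}$ is the triple $\{P_{ij}, K_{ij,kl}, P_{kl}\}$. Since the space of parameters of $P_{ij}$ and of $K_{ij,kl}$ is a single point, while that of $K_{ij}$ is $\C P^1_A$, Proposition~\ref{bij} gives $\tilde{Z}_{ij,9} = \mathcal{C}_{ij} \cong \C P^1_A$ and $\tilde{Z}_{ij,kl} = \mathcal{C}_{ij,kl}$, a single point. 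This also records the identifications as subsets of $\mathcal{F}_5$ via the explicit coordinate list in the preceding corollary.

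Next I would treat $\tilde{Z}_{ij,7}$, corresponding to the pyramid $P_{ij}$. The decompositions of $\Delta_{5,2}$ containing $P_{ij}$ are precisely the pair $\{K_{ij}, P_{ij}\}$ together with the three triples $\{P_{ij}, K_{ij,kl}, P_{kl}\}$ for the three choices of $\{k,l\}\subset\{1,\ldots,5\}\setminus\{i,j\}$. Applying Proposition~\ref{bij} to each, the contributions are $\mathcal{C}_{ij}$ and the three points $\mathcal{C}_{ij,kl}$, yielding the set-theoretic equality
\[
\tilde{Z}_{ij,7} \;=\; \mathcal{C}_{ij} \;\cup\; \bigcup_{\{k,l\}\subset\{1,\ldots,5\}\setminus\{i,j\}} \mathcal{C}_{ij,kl}.
\]

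The main obstacle is the topological claim $\tilde{Z}_{ij,7}\cong \C P^1$: the right hand side above is only $\C P^1_A$ together with three extra points, and one must verify that these three points fill in exactly the three missing points of $A=\{(0:1),(1:0),(1:1)\}$ inside the ambient $\C P^1$ in $\mathcal{F}_5$. Here I would use the explicit coordinatization from the preceding corollary: for each $\mathcal{C}_{ij}$ the coordinates of $Z_{ij}(c)$ depend on $(c:c')$ linearly, so the closure inside $\mathcal{F}_5$ is a $\C P^1$ whose three points at $(c:c')\in A$ are precisely the three cycles $Z_{ij,kl}$ listed (one checks this case by case using the coordinate table, e.g.\ $Z_{14,23}=((0:1),(0:1),(1:0))$ is the $(c:c')=(1:0)$ limit of $Z_{23}(c)=((0:1),(0:1),(c:c'))$). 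This gives the desired homeomorphism $\tilde{Z}_{ij,7}\cong \C P^1$ and completes the identification inside $\mathcal{F}_5$.
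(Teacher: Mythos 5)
Your argument is correct and matches the route the paper takes implicitly: this statement is presented as a corollary without a written proof, and the intended derivation is exactly what you do — enumerate the decompositions of $\Delta_{5,2}$ containing $K_{ij}$, $P_{ij}$, or $K_{ij,kl}$ from the preceding lemma, apply Proposition~\ref{bij} to read off the spaces of parameters (a point for each pyramid $P_{ij}$ and for each $K_{ij,kl}$, $\C P^1_A$ for $K_{ij}$), and for $\tilde{Z}_{ij,7}$ check, via the explicit coordinate table in the preceding corollary, that the three isolated cycles $Z_{ij,kl}$ sit at the three missing points $A=\{(0:1),(1:0),(1:1)\}$ of the $\C P^1$ parametrizing $\mathcal{C}_{ij}$, so that the union closes up to a full $\C P^1$. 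You correctly flag the $\C P^1$ claim as the only step needing a genuine topological check, and your sample verification ($Z_{14,23}$ as the $(1:0)$-limit of $Z_{23}(c)$) is the right kind of computation; doing the remaining cases is routine bookkeeping with the same table.
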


Note that the spaces  $\tilde{Z}_{ij,9}, \tilde{Z}_{ij, 7}$ and $\tilde{Z}_{ij, k}$ coincide with the virtual spaces of parameters  $\tilde{F}_{ij, 9}$, $\tilde{F}_{ij, 7}$ and $\tilde{F}_{ij, kl}$ for the corresponding  strata in $G_{5,2}$  from ~\cite{BT-n2}.

 Victor M.~Buchstaber\\
Steklov Mathematical Institute, Russian Academy of Sciences\\ 
Gubkina Street 8, 119991 Moscow, Russia\\
E-mail: buchstab@mi.ras.ru
\\ \\ 

Svjetlana Terzi\'c \\
Faculty of Science and Mathematics, University of Montenegro\\
Dzordza Vasingtona bb, 81000 Podgorica, Montenegro\\
E-mail: sterzic@ucg.ac.me 
\end{document}